\documentclass[12pt,reqno]{amsart}
\usepackage{amsmath,amsfonts,amsthm,amsopn,amssymb,mathrsfs,enumerate,color}
\usepackage{amsmath}
\usepackage{comment}

\usepackage{cite,marginnote}
\usepackage{mathtools} 

\usepackage{color,graphicx,enumerate}
\usepackage[colorlinks=true,urlcolor=blue,
citecolor=red,linkcolor=blue,linktocpage,pdfpagelabels,
bookmarksnumbered,bookmarksopen]{hyperref}
\usepackage[english]{babel}

\usepackage[left=2.9cm,right=2.9cm,top=2.8cm,bottom=2.8cm]{geometry}

\numberwithin{equation}{section}

\makeindex
\newtheorem{theorem}{Theorem}[section]

\newtheorem{corollary}{Corollary}[section]
\newtheorem{remark}{Remark}[section]

\newtheorem{definition}{Definition}[section]
\newtheorem{proposition}{Proposition}[section]
\newtheorem{lemma}{Lemma}[section]

\newtheorem{iteration lemma}{iteration Lemma}[section]

\newcommand{\bt}{\begin{theorem}}
\newcommand{\et}{\end{theorem}}
\newcommand{\bl}{\begin{lemma}}
\newcommand{\el}{\end{lemma}}
\newcommand{\bd}{\begin{definition}}
\newcommand{\ed}{\end{definition}}
\newcommand{\bc}{\begin{corollary}}
\newcommand{\ec}{\end{corollary}}
\newcommand{\bp}{\begin{proof}}
\newcommand{\ep}{\end{proof}}
\newcommand{\bx}{\begin{example}}
\newcommand{\ex}{\end{example}}
\newcommand{\bi}{\begin{exercise}}
\newcommand{\ei}{\end{exercise}}
\newcommand{\bo}{\begin{proposition}}
\newcommand{\eo}{\end{proposition}}
\newcommand{\br}{\begin{remark}}
\newcommand{\er}{\end{remark}}
\newcommand{\beq}{\begin{equation}}
\newcommand{\eeq}{\end{equation}}
\newcommand{\ba}{\begin{align}}
\newcommand{\ea}{\end{align}}
\newcommand{\bn}{\begin{enumerate}}
\newcommand{\en}{\end{enumerate}}
\newcommand{\bg}{\begin{align*}}
\newcommand{\bcs}{\begin{cases}}
\newcommand{\ecs}{\end{cases}}

\newcommand{\bean}{\begin{eqnarray*}}
\newcommand{\eean}{\end{eqnarray*}}

\def\bd{\mathrm{bd}\,}

\usepackage{amsmath,amssymb,amsthm,mathtools}
\usepackage{hyperref}

\theoremstyle{remark}

\title[Stability for Affine Fractional Sobolev]{Sharp Stability for the Affine Fractional Sobolev Inequality}

\author[S.~Fan]{Song Fan}
\author[G-D.~Li]{Gui-Dong Li}
\author[J.~J.~Zhang]{Jianjun Zhang}

\address[S.~Fan]{\newline\indent School  of  Mathematics  and  Statistics
\newline\indent
Guizhou University
\newline\indent
Guiyang, 550025, Guizhou, PR China}
\email{\href{mailto:doraemonsong77@gmail.com}{doraemonsong77@gmail.com}}

\address[G-D.~Li]{\newline\indent School  of  Mathematics  and  Statistics
\newline\indent
Guizhou University
\newline\indent
Guiyang, 550025, Guizhou, PR China}
\email{\href{mailto:bestdong123@163.com}{bestdong123@163.com}}

\address[J.~J.~Zhang]{\newline\indent College of Mathematics and Statistics
\newline\indent
Chongqing Jiaotong University
\newline\indent
Xuefu, Nan'an, 400074, Chongqing, PR China}
\email{\href{mailto:zhangjianjun09@tsinghua.org.cn}{zhangjianjun09@tsinghua.org.cn}}

\begin{document}

\begin{abstract}
We prove a sharp quantitative stability result for the affine fractional
\(L^2\)-Sobolev inequality in \(\dot H^s(\mathbb R^n)\), \(0<s<1\), introduced by
Haddad--Ludwig (\emph{Math. Ann.} \textbf{388} (2024), 1091--1115). In
particular, we identify the kernel of the affine Hessian, determine the sharp
local spectral gap, and show that the optimal global stability constant is
strictly smaller than the corresponding local spectral value.

\vskip0.23in

\noindent{\it   {\bf Key  words:} Sharp quantitative stability, Affine fractional Sobolev inequality, fractional Sobolev inequality.}

\vskip0.1in
\noindent{\it  {\bf 2020 Mathematics Subject Classification:} Primary 46E35; Secondary 26D10, 35R11.}

\end{abstract}

\maketitle

\section{Introduction}
Sharp Sobolev inequalities, together with their quantitative stability
counterparts, occupy a central position in geometric analysis and nonlinear
PDE. In the Euclidean case, the sharp Sobolev inequality and the complete
classification of its extremals go back to the  works of Aubin and
Talenti~\cite{Aubin76,Talenti76}. The corresponding fractional theory follows
from Lieb's sharp Hardy--Littlewood--Sobolev inequality~\cite{Lieb83}. Let
\(0<s<\frac n2\),  and set \(2_s^*=\frac{2n}{n-2s}\).
With the normalization of the homogeneous fractional seminorm given by
\begin{equation}\label{eq:seminorm}
[u]_{\dot H^s(\mathbb R^n)}^2
:=
\int_{\mathbb R^n}
|\zeta|^{2s}|\widehat u(\zeta)|^2\,d\zeta
\qquad \text{where}\quad
\widehat u(\zeta)
=
\int_{\mathbb R^n}e^{-ix\cdot\zeta}u(x)\,dx,
\end{equation}
the sharp fractional Sobolev inequality takes the form
\begin{equation}
\label{eq:intro-frac-sob}
[u]_{\dot H^s(\mathbb R^n)}^2
\ge
S_{n,s}\|u\|_{L^{2_s^*}(\mathbb R^n)}^2 .
\end{equation}
Moreover, equality in \eqref{eq:intro-frac-sob} holds if and only if \(u\)
belongs to the conformal family \cite{Lieb83}
\[
U_{c,\lambda,x_0}(x)
=
c\left(\frac{\lambda}{\lambda^2+|x-x_0|^2}\right)^{\frac{n-2s}{2}},
\qquad
c\in\mathbb R,\quad \lambda>0,\quad x_0\in\mathbb R^n .
\]
For background on the fractional Laplacian, the Gagliardo seminorm
\eqref{eq:seminorm}, and the Fourier characterization of fractional Sobolev
spaces, we refer to Di Nezza--Palatucci--Valdinoci~\cite{DPV12}.

The quantitative stability problem for \eqref{eq:intro-frac-sob}, in the sense
of Bianchi--Egnell \cite{BE91}, was solved in the fractional setting by
Chen--Frank--Weth \cite{CFW13}. More precisely, if
\begin{equation}
\label{eq:intro-classical-manifold}
\mathcal M
:=
\left\{
U_{c,\lambda,x_0}:
c\in\mathbb R,\ \lambda>0,\ x_0\in\mathbb R^n
\right\},
\end{equation}
then there exists \(\alpha_{n,s}>0\) such that
\begin{equation}
\label{eq:intro-CFW-stability}
d_{\operatorname{frac}}(u,\mathcal M)^2
\ge
[u]_{\dot H^s(\mathbb R^n)}^2
-
S_{n,s}\|u\|_{L^{2_s^*}(\mathbb R^n)}^2
\ge
\alpha_{n,s}
d_{\operatorname{frac}}(u,\mathcal M)^2,
\end{equation}
where
\[
d_{\operatorname{frac}}(u,\mathcal M)^2
:=
\inf_{V\in\mathcal M}
[u-V]_{\dot H^s(\mathbb R^n)}^2.
\]

Affine Sobolev theory provides a genuinely stronger and more geometric
refinement of the classical Sobolev inequality. The endpoint affine
Sobolev--Zhang inequality was established by Zhang~\cite{Zhang99}, and the
sharp affine \(L^p\)-Sobolev inequality was proved by
Lutwak--Yang--Zhang~\cite{LYZ02}, building on the \(L_p\) affine
isoperimetric theory developed in~\cite{LYZ00}. Further developments include
asymmetric affine Sobolev inequalities~\cite{HS09}, affine symmetrization and
affine logarithmic Sobolev inequalities~\cite{HSX12}, affine
Moser--Trudinger and Morrey--Sobolev inequalities~\cite{CLYZ09}, affine
\(BV\)-Sobolev inequalities~\cite{Wang12}, and affine Sobolev-type
inequalities obtained through the \(L_p\) Busemann--Petty centroid
inequality~\cite{HJM16,HJM19}. Fractional and trace variants have also been
studied in \cite{DHJM18,HL24,HL25,HL25-1,CPS15,Bullion26}.

Haddad--Ludwig~\cite{HL24} proved the sharp affine fractional
\(L^p\)-Sobolev inequality. In the Hilbertian case \(p=2\), the corresponding
directional energy can be written on the Fourier side as
\begin{equation}
\label{eq:intro-Axi-frac}
A_\xi(u)
:=
\int_{\mathbb R^n}
|\xi\cdot\zeta|^{2s}|\widehat u(\zeta)|^2\,d\zeta,
\qquad \xi\in\mathbb S^{n-1}.
\end{equation}
With
\( \left\langle g\right\rangle_{\mathbb S^{n-1}}
=
\frac1{|\mathbb S^{n-1}|}
\int_{\mathbb S^{n-1}}g(\xi)\,d\sigma(\xi)\),
rotation invariance gives
\begin{equation}
\label{eq:intro-average-A}
\left\langle A_\xi(u)\right\rangle_{\mathbb S^{n-1}}
=
m_{n,s}[u]_{\dot H^s(\mathbb R^n)}^2, \qquad  m_{n,s}
=
\left\langle|\omega_1|^{2s}\right\rangle_{\omega\in\mathbb S^{n-1}} .
\end{equation}
We then define the normalized affine fractional energy by
\begin{equation}
\label{eq:intro-affine-energy}
\mathcal E_{\operatorname{aff},s}(u)^2
:=
m_{n,s}^{-1}
\left\langle
A_\xi(u)^{-r}
\right\rangle_{\mathbb S^{n-1}}^{-1/r}, \qquad
r=\frac n{2s}.
\end{equation}
With this convention, the sharp affine fractional Sobolev inequality of
Haddad--Ludwig~\cite{HL24}  takes the
form
\begin{equation}
\label{eq:intro-aff-frac-sob-p=2}
[u]_{\dot H^s(\mathbb R^n)}^2
\ge
\mathcal E_{\operatorname{aff},s}(u)^2
\ge
S_{n,s}\|u\|_{L^{2_s^*}(\mathbb R^n)}^2.
\end{equation}
Thus the affine fractional Sobolev inequality is a genuine strengthening of
the classical sharp fractional Sobolev inequality \eqref{eq:intro-frac-sob}.

The quantitative stability problem for sharp Sobolev-type inequalities goes
back to Br\'ezis--Lieb~\cite{BL85} and Bianchi--Egnell~\cite{BE91}. In the
fractional Sobolev setting, the global stability theorem was proved by
Chen--Frank--Weth~\cite{CFW13}; the sharp stability constant and related
asymptotic problems were further studied in
\cite{Konig23,Konig25,CLT25}. Recent developments in stability theory include
\cite{CLT23,CGT25,DEFFL25,FZ22,ZZ25,ZZZ25,FLZ26,Neumayer26}, among others. In the affine
setting, stability is less developed; we mention in particular Wang's
quantitative result for affine symmetrization~\cite{Wang13}, Nguyen's
stability theorem for the affine \(BV\)-Sobolev inequality~\cite{Nguyen16},
and the anisotropic \(BV\)-stability theory of
Figalli--Maggi--Pratelli~\cite{FMP13}. More recently, Fan--Li--Zhang~\cite{FLZ26B}
established sharp quantitative stability for the affine \(p\)-Sobolev inequality.
The present work concerns the sharp
global stability of \eqref{eq:intro-aff-frac-sob-p=2}.

Let \(U(x)=\left(1+|x|^2\right)^{-\frac{n-2s}{2}}\) .
The equality cases in \eqref{eq:intro-aff-frac-sob-p=2} are the affine images
of the classical fractional Sobolev extremals \cite{HL24}. Thus the natural extremal cone is
\[
\mathcal M_{\rm cone}
:=
\left\{
c\,T_{A,x_0}^{-1}U:
c\in\mathbb R,\ A\in GL^+(n),\ x_0\in\mathbb R^n
\right\},
\]
where
\[
(T_{A,x_0}u)(x)
:=
|\det A|^{\frac{n-2s}{2n}}u(Ax+x_0).
\]
Correspondingly, we define
\[
D_{\operatorname{aff}}(u,\mathcal M_{\rm cone})^2
:=
\inf_{c\in\mathbb R}
\inf_{A\in GL^+(n),\,x_0\in\mathbb R^n}
\left[
T_{A,x_0}u-cU
\right]_{\dot H^s(\mathbb R^n)}^2 .
\]

Our main theorem is the following global sharp stability estimate.

\begin{theorem}
\label{thm:two-sided-affine-stability}
Assume \(0<s<1\) and \(n>2s\). Then there exists \(c_{n,s}>0\) such that, for every
\(u\in\dot H^s(\mathbb R^n)\),
\begin{equation}
\label{eq:intro-global-stability}
c_{n,s}D_{\operatorname{aff}}(u,\mathcal M_{\rm cone})^2
\le
\mathcal E_{\operatorname{aff},s}(u)^2
-
S_{n,s}\|u\|_{L^{2_s^*}(\mathbb R^n)}^2
\le
D_{\operatorname{aff}}(u,\mathcal M_{\rm cone})^2 .
\end{equation}
Moreover, the exponent \(2\) is sharp.
\end{theorem}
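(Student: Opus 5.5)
The plan is to reduce both inequalities to affine-invariant statements and then handle the upper and lower bounds separately.

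\emph{Invariances.} Write \(\delta(u):=\mathcal E_{\operatorname{aff},s}(u)^2-S_{n,s}\|u\|_{L^{2_s^*}}^2\). Both \(\mathcal E_{\operatorname{aff},s}(u)^2\) and \(\|u\|_{L^{2_s^*}}^2\) are invariant under \(u\mapsto T_{A,x_0}u\) for \(A\in GL^+(n)\). For \(\|\cdot\|_{L^{2_s^*}}\) this is a change of variables, since the normalization \(|\det A|^{(n-2s)/(2n)}\) is exactly the \(L^{2_s^*}\)-scaling. For \(\mathcal E_{\operatorname{aff},s}\) it follows by rewriting \(\widehat{T_{A,x_0}u}\) and changing variables \(\xi\mapsto A^{-T}\xi/|A^{-T}\xi|\) in the spherical average; the choice \(r=n/(2s)\) is precisely what makes the Jacobian factors cancel. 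Hence \(\delta(T_{A,x_0}u)=\delta(u)\).

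\emph{Upper bound.} Fix \(A,x_0,c\) and set \(v=T_{A,x_0}u\). Then
\[
\delta(u)=\delta(v)\le [v]_{\dot H^s}^2-S_{n,s}\|v\|_{L^{2_s^*}}^2 .
\]
The right-hand side is at most \([v-cU]_{\dot H^s}^2\). This is the left inequality in \eqref{eq:intro-CFW-stability}, which can also be seen directly: \([v]^2-[v-cU]^2=2c\langle v,U\rangle_{\dot H^s}-c^2[U]^2\), and by the Euler--Lagrange equation \(\langle v,U\rangle_{\dot H^s}=S_{n,s}\|U\|^{2-2_s^*}\int U^{2_s^*-1}v\le S_{n,s}\|U\|\,\|v\|\) by Hölder. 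Optimizing the resulting quadratic in \(c\) gives at most \(S_{n,s}\|v\|^2\). Taking the infimum over \(A,x_0,c\) yields the upper bound.

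\emph{Lower bound.} By homogeneity we may normalize \(\|u\|_{L^{2_s^*}}=1\). The main idea is to choose \(A\) so that \(v=T_{A,0}u\) is in \emph{isotropic position}, i.e.\ the anisotropic ellipsoid-type body associated with \(\xi\mapsto A_\xi(v)^{-1/(2s)}\) is balanced. For \(p=2\) the natural condition is that the quadratic-type moment tensor \(\langle A_\xi(v)^{-r}\xi\otimes\xi\rangle\) be a multiple of the identity. Such an \(A\) should exist by a variational argument: minimize \([T_{A}u]^2\) over \(\det A=1\), which is coercive since \([T_Au]^2\) controls the directional energies.

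At this minimizer, the first variation in \(A\) gives exactly this balance condition. The goal is then the key estimate
\[
\mathcal E_{\operatorname{aff},s}(v)^2\ \ge\ [v]_{\dot H^s}^2-C\,\bigl([v]^2-S_{n,s}\bigr)\ \ \text{fails};
\]
instead, the usable intermediate target is a two-regime argument.

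\emph{Case (i): \(\delta(u)\ge\varepsilon_0\).} Use \(D_{\operatorname{aff}}^2\le[v]^2\) (take \(c=0\)) together with a uniform bound \([v]^2\le C\) at the isotropic position. The latter should follow from a reverse comparison between \([v]^2\) and \(\mathcal E_{\operatorname{aff}}(v)^2\) in isotropic position, namely \(\langle A_\xi^{-r}\rangle^{-1/r}\gtrsim\langle A_\xi\rangle\) once the ellipsoid is a ball.

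\emph{Case (ii): \(\delta(u)\) small.} Argue by compactness: a minimizing sequence of normalized isotropic \(v_k\) with \(\delta\to0\) has, after translation and dilation (which preserve isotropy), \(v_k\to U\) by concentration-compactness for \eqref{eq:intro-aff-frac-sob-p=2} and the equality characterization of \cite{HL24}. Then Taylor-expand \(\mathcal E_{\operatorname{aff},s}(U+\varphi)^2\) to second order. The affine Hessian equals the classical fractional Hessian minus a nonnegative term coming from the variance of \(\xi\mapsto A_\xi(\varphi,U)\) over the sphere. Its kernel consists of the tangent space of \(\mathcal M_{\rm cone}\): the classical kernel plus the \(A\)-directions \(x\cdot\nabla U\)-type variations \(Bx\cdot\nabla U\) with \(B\) symmetric. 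A spectral gap on the orthogonal complement, obtained by decomposing in spherical harmonics via stereographic projection (where the variance term only sees degree-two components), gives the local estimate \(\delta\ge c\,D^2\). Combining the two regimes gives the lower bound.

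\emph{Sharpness.} The exponent 2 is sharp by testing \(u=U+\varepsilon\varphi\) with \(\varphi\) orthogonal to the tangent space: then both \(\delta(u)\) and \(D_{\operatorname{aff}}^2\) are of order \(\varepsilon^2\).

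The main obstacle is the local step. One must show that, after orthogonality to the tangent space of the cone (the dimension-\(n(n+1)/2\) affine directions), the subtracted variance term does not close the spectral gap. This amounts to an explicit comparison of eigenvalues on the degree-two harmonics, using the Funk--Hecke coefficients of \(|\xi\cdot\zeta|^{2s}\).
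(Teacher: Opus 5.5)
Your upper bound (including your self-contained derivation of the easy half of Chen--Frank--Weth from the Euler--Lagrange equation and H\"older), your sharpness test, and the architecture of your lower bound all match the paper. That architecture is: normalize in the $SL(n)$-orbit, use compactness plus the Haddad--Ludwig equality cases, then prove a local Hessian estimate. Your two regimes are equivalent to the paper's contradiction argument.

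\textbf{The genuine gap is in the local step.} You assert that the correction $\mathcal R_s(\phi)=m_{n,s}^{-1}\frac{n+2s}{sA_0}\operatorname{Var}_\xi L_\xi(\phi)$ only sees degree-two components, and so reduce the main obstacle to degree two. That is the $s=1$ picture, where $|\xi\cdot\zeta|^{2}$ is quadratic in $\xi$; it is false for $0<s<1$. Use the physical angular decomposition $\phi=\sum f_{\ell,m}(r)Y_{\ell,m}(\theta)$, in which Funk--Hecke diagonalizes the variance. There $\operatorname{Var}_\xi L_\xi(\phi)=\sum_{\ell\ge1}\sum_m\rho_{\ell,s}^2|I_\ell(f_{\ell,m})|^2$, with $\rho_{\ell,s}=0$ for odd $\ell$ but $\rho_{2m,s}=(-1)^m(-s)_m/(\frac n2+s)_m\neq0$ for every $m\ge1$. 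Nor is the correction diagonal in conformal degree on $\mathbb S^n$, so stereographic projection alone does not isolate it.

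Thus every even sector $\ell\ge4$ carries a nonzero negative perturbation of the classical Hessian. Your plan never shows that this perturbation creates no new kernel and does not close the gap there. This is precisely the nonlocal feature of the problem, and it needs a quantitative input. The paper:
\begin{itemize}
\item bounds $\mathcal R_s^{(\ell)}\le\frac{n+2s}{s}\rho_{\ell,s}^2[\,\cdot\,]_{\dot H^s}^2$ by Cauchy--Schwarz against $A_0$;
\item uses $|\rho_{\ell,s}|\le|\rho_{4,s}|$;
\item compares with the classical sector bound $1-\Lambda_1/\Lambda_\ell$, which holds because physical angular degree $\ell$ contains only conformal degrees $k\ge\ell$ (Lemmas~\ref{lem:sharp-classical-frac-sector-bound} and~\ref{lem:appendix-even-ge-four}).
\end{itemize}
In degree two, by contrast, no explicit eigenvalue comparison is needed. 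Per component, $\mathcal Q_U^{(2)}$ is a rank-one perturbation of a positive form. Together with $\mathcal Q_U\ge0$ (minimality of $U$) and the known zero mode $rU'$, this forces the kernel to be exactly $\operatorname{span}\{rU'\}$. Every term of $\mathcal Q_U$ other than $[\,\cdot\,]_{\dot H^s}^2$ is compact, so a positive gap on $(\ker\mathcal Q_U)^\perp$ is automatic once the kernel is known. What your argument lacks is exactly the kernel exclusion in the sectors $\ell\ge4$.

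\textbf{A secondary issue is the normalization.} The first variation of $A\mapsto[T_{A,0}u]_{\dot H^s}^2$ on $SL(n)$ gives $\int|\eta|^{2s-2}\eta\otimes\eta\,|\widehat v(\eta)|^2\,d\eta\propto I$, not the moment condition you state. In any case, case (i) uses the reverse comparison $[v]_{\dot H^s}^2\le C_{n,s}\mathcal E_{\operatorname{aff},s}(v)^2$, not $[v]_{\dot H^s}^2\le C$. The comparison suffices: with $\|u\|_{L^{2_s^*}}=1$ and $\delta\ge\varepsilon_0$, one has $\mathcal E_{\operatorname{aff},s}(v)^2=\delta+S_{n,s}\le(1+S_{n,s}/\varepsilon_0)\delta$. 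But you only assert this comparison. The paper proves it in Lemma~\ref{lem:affine-normalization} by applying John's theorem to $\operatorname{conv}K_u$, using that $p_u$ is a norm for $2s\ge1$ and that $p_u^{2s}$ is subadditive for $2s<1$. Given this, your minimizer over $SL(n)$ inherits the bound.

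Finally, in case (ii) compactness only yields a single profile $c\,T_{A,x_0}^{-1}U$ and a weakly vanishing remainder. Upgrading to strong convergence needs the strict monotonicity of $\Phi$, as in Lemma~\ref{lem:normalized-compactness}.
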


The affine fractional stability \eqref{eq:intro-global-stability} is not a formal consequence of the
classical fractional stability theorem of Chen--Frank--Weth~\cite{CFW13}. This
distinction is already visible at the level of the energy. In the local
Hilbertian case \(s=1\), the directional energies satisfy
\[
A_\xi(u)
=
\int_{\mathbb R^n}|\xi\cdot\nabla u|^2\,dx
=
\xi\cdot
\left(
\int_{\mathbb R^n}\nabla u\otimes\nabla u\,dx
\right)\xi ,
\]
so the affine energy is governed by a finite-dimensional positive matrix. After
an affine normalization, the problem can therefore be reduced to the classical
Sobolev stability of Bianchi--Egnell~\cite{BE91}. For \(0<s<1\), no such
finite-dimensional reduction is available: the quantities \(A_\xi(u)\) are
genuinely nonlocal Fourier-side quadratic forms, and the affine correction must
be controlled at the level of the full family \(\{A_\xi\}_{\xi\in\mathbb S^{n-1}}\).

The proof has two main parts. First, we compute the Hessian of the affine
deficit at the radial bubble \(U\). The affine Hessian decomposes as
\(\mathcal Q_U
=
\mathcal Q_{\operatorname{frac},s}
-
\mathcal R_s\),
where \(\mathcal Q_{\operatorname{frac},s}\) is the classical fractional
Sobolev Hessian \eqref{eq:def-hessian-frac} and \(\mathcal R_s\) is a nonnegative variance correction \eqref{eq:def-Rs}
coming from the directional affine energy. This correction enlarges the
classical conformal kernel by the trace-free affine Jacobi fields
\(x\cdot B\nabla U\), \( B=B^T\), \(\operatorname{tr}B=0\).
Combining the conformal spectral decomposition of the classical fractional
Hessian with a Funk--Hecke analysis of \(\mathcal R_s\), we identify
\[
\ker \mathcal Q_U
=
T_U\mathcal M_{\rm cone}
=
\operatorname{span}
\left\{
U,\ \partial_{x_1}U,\dots,\partial_{x_n}U,\
\frac{n-2s}{2}U+x\cdot\nabla U,\
x\cdot B\nabla U
\right\}.
\]

Second, we pass from local coercivity to the global estimate. This requires
controlling the affine invariances and excluding loss of compactness under
translations and dilations. We combine affine normalization, fractional profile
decomposition, and a no-splitting argument for near-extremizing sequences. The
local spectral estimate then yields the global stability inequality
\eqref{eq:intro-global-stability}.

We also consider the optimal global quotient
\[
C_{\operatorname{aff},s}
:=
\inf_{u\notin\mathcal M_{\rm cone}}
\frac{
\mathcal E_{\operatorname{aff},s}(u)^2
-
S_{n,s}\|u\|_{L^q(\mathbb R^n)}^2
}{
D_{\operatorname{aff}}(u,\mathcal M_{\rm cone})^2
}.
\]
In analogy with the classical Bianchi--Egnell quotient studied by
K\"onig~\cite{Konig23,Konig25}, we prove the following strict upper bound.

\begin{theorem}
\label{thm:affine-strict-upper-bound}
Assume \(0<s<1\) and \(n>2s\). Then
\[
C_{\operatorname{aff},s}
<
\frac{2s}{\frac n2+s+1}.
\]
\end{theorem}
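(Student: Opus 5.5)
We will follow König's strategy for the classical Bianchi--Egnell quotient. We test the quotient along a two-parameter family $u_\varepsilon = U + \varepsilon \rho$, with $\rho$ a suitable eigenfunction in the orthogonal complement of $T_U\mathcal M_{\rm cone}$, and expand both the numerator and $D_{\operatorname{aff}}(u_\varepsilon,\mathcal M_{\rm cone})^2$ to third or fourth order in $\varepsilon$. The value $\frac{2s}{\frac n2+s+1}$ should be the sharp local spectral value, i.e. the infimum of $\mathcal Q_U(\rho)/[\rho]_{\dot H^s}^2$ over $\rho\perp T_U\mathcal M_{\rm cone}$. Via the conformal (stereographic) decomposition of $\mathcal Q_{\operatorname{frac},s}$ and the Funk--Hecke analysis of $\mathcal R_s$, this infimum is attained on a specific spherical-harmonic sector. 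Since trace-free degree-two directions $x\cdot B\nabla U$ are absorbed into the kernel, the natural minimizer is a spherical harmonic of degree $2$, or the next sector not killed by $\mathcal R_s$. On the sphere, the classical eigenvalue ratio for degree $k$ is $1-\frac{\Gamma(n/2+s)\Gamma(n/2-s+k)}{\Gamma(n/2-s)\Gamma(n/2+s+k)}$ up to normalization; for $k=2$ with the affine correction this should yield the number $\frac{2s}{n/2+s+1}$. First we identify this minimizing $\rho$ explicitly from the earlier spectral computation and record the equality $\mathcal Q_U(\rho)=\frac{2s}{n/2+s+1}[\rho]^2$.

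Next we expand. For $\rho\perp T_U\mathcal M_{\rm cone}$ (orthogonality in $\dot H^s$), $D_{\operatorname{aff}}(U+\varepsilon\rho)^2=\varepsilon^2[\rho]^2+O(\varepsilon^4)$, since the optimal affine parameters move only at order $\varepsilon^2$. We prove this by nondegeneracy of the minimization in $(c,A,x_0)$ near the identity, using the implicit function theorem. The numerator expands as
\[
\varepsilon^2\mathcal Q_U(\rho)+\varepsilon^3 \mathcal T_3(\rho)+\varepsilon^4\mathcal T_4(\rho)+o(\varepsilon^4),
\]
where $\mathcal T_3$ and $\mathcal T_4$ collect the third and fourth derivatives of both $\mathcal E_{\operatorname{aff},s}^2$ (through the $r$-th power means of $A_\xi$) and $-S_{n,s}\|\cdot\|_{L^{2_s^*}}^2$. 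If $\mathcal T_3(\rho)\neq0$, choosing the sign of $\varepsilon$ makes the quotient strictly smaller than the local value, which finishes the proof. Otherwise, we replace $\rho$ by $\rho+\varepsilon\sigma$ with a second-order correction $\sigma$ and optimize the resulting quadratic in $\sigma$. This produces a fourth-order coefficient $\mathcal T_4(\rho)-\langle\text{third-order term},\mathcal Q_U^{-1}\,\cdot\rangle$ that must be shown negative.

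The main obstacle is sign control of this fourth-order (or third-order) coefficient. For the $L^{2_s^*}$ part we reuse König's computation: for degree-$2$ harmonics on the sphere, $\int U^{2_s^*-3}\rho^3$ is generally nonzero, because the product of three degree-$2$ harmonics has nonzero spherical average, for instance $\int_{\mathbb S^n} Y^3$ with $Y$ zonal. The affine energy is concave in the right sense: $\mathcal E_{\operatorname{aff},s}^2$ is a negative-power mean of the linear-in-$|\hat u|^2$ functionals $A_\xi$, so its higher derivatives carry a favorable sign from Jensen. We therefore first try to show $\mathcal T_3(\rho)\ne0$ for zonal $\rho$. The affine contribution to $\mathcal T_3$ is a cubic moment of the variance of $A_\xi$ over $\xi$, which vanishes to leading order unless $\rho$ couples with the anisotropic directions. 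The $L^{2_s^*}$ contribution is then the explicit nonzero Beta-function integral, and it gives strict inequality.

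If the cubic term cancels, we fall back on the fourth-order route with $\rho$ combined with a higher-degree harmonic as in König. In that case we use the concavity of $t\mapsto t^{-1/r}$ composed with $t^{-r}$-means, which gives $\mathcal E_{\operatorname{aff}}^2(U+\varepsilon\rho)\le$ its second-order Taylor polynomial up to $o(\varepsilon^4)$. This reduces the negativity of the fourth-order coefficient to the classical fractional computation.
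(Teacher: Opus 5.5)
Your main route is the paper's: take the degree-two harmonic zonal about $e_{n+1}$, i.e.\ $\rho=P\bigl(\Theta_{n+1}^2-\frac1{n+1}\bigr)$, which is radial. Then $A_\xi(U+\varepsilon\rho)$ is independent of $\xi$, so $\delta_{\operatorname{aff},s}(U+\varepsilon\rho)=\delta_{\operatorname{frac},s}(U+\varepsilon\rho)$ exactly and the affine contribution vanishes at every order, not merely ``to leading order''. From there you use K\"onig's nonzero cubic coefficient together with $D_{\operatorname{aff}}(U+\varepsilon\rho,\mathcal M_{\rm cone})^2=\varepsilon^2[\rho]_{\dot H^s(\mathbb R^n)}^2+O(\varepsilon^4)$ and choose the sign of $\varepsilon$.

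One caveat, which the paper shares: that cubic coefficient is a multiple of $\int_{\mathbb S^n}v_2^3$. It is positive for $n\ge2$, but it vanishes for every $v_2\in\mathcal H_2(\mathbb S^1)$. So in the case $n=1$, $s<\frac12$, which the hypothesis $n>2s$ allows, the cubic route gives nothing, and only your fourth-order fallback could close the argument; you have not actually carried that fallback out.
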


The paper is organized as follows. In Section~\ref{sec:variation} we  compute the second variation of
the affine deficit at the radial bubble \(U\). Section~\ref{sec:global}
establishes the compactness theorem for normalized affine near-extremizers.
Section~\ref{sec:proof} proves Theorem~\ref{thm:two-sided-affine-stability}
and Theorem~\ref{thm:affine-strict-upper-bound}. Finally,
Section~\ref{sec:sector-decomposition-affine-frac} carries out the sectorial
spectral analysis of the affine Hessian and identifies its kernel.

\section{Variational structure of the affine fractional Sobolev deficit}

\label{sec:variation}

\subsection{Second variation at the radial bubble}

Let \(\mathfrak a_\xi\) be the bilinear form associated with \(A_\xi\):
\[
\mathfrak a_\xi(u,v)
:=
\int_{\mathbb R^n}
|\xi\cdot\zeta|^{2s}
\widehat u(\zeta)\overline{\widehat v(\zeta)}\,d\zeta .
\]
Thus \(A_\xi(u)=\mathfrak a_\xi(u,u)\).
Since \(U\) is radial, \(\widehat U(\zeta)=\widehat U_0(|\zeta|)\). Hence, by
polar coordinates \(\zeta=\rho\theta\),
\begin{equation}
\label{eq:Axu-1}
\begin{aligned}
A_\xi(U)
&=
\int_0^\infty
\rho^{n-1+2s}|\widehat U_0(\rho)|^2
\left(
\int_{\mathbb S^{n-1}}|\xi\cdot\theta|^{2s}\,d\sigma(\theta)
\right)d\rho \\
&  =
|\mathbb S^{n-1}|m_{n,s}
\int_0^\infty
\rho^{n-1+2s}|\widehat U_0(\rho)|^2\,d\rho
=:A_0 .
\end{aligned}
\end{equation}

For \(\phi\in\dot H^s(\mathbb R^n)\), set
\begin{equation}
\label{eq:def-Lxi}
L_\xi(\phi)
:=
\mathfrak a_\xi(U,\phi)
=
\int_{\mathbb R^n}
|\xi\cdot\zeta|^{2s}
\widehat U(\zeta)\overline{\widehat\phi(\zeta)}\,d\zeta .
\end{equation}
Then
\[
A_\xi(U+t\phi)
=
A_0+2tL_\xi(\phi)+t^2A_\xi(\phi).
\]

We define the affine Hessian \(\mathcal Q_U\) by
\[
\delta_{\operatorname{aff},s}(U+t\phi)
=
t^2\mathcal Q_U(\phi)
+
o(t^2),
\]
where
\[
\delta_{\operatorname{aff},s}(u)
:=
\mathcal E_{\operatorname{aff},s}(u)^2
-
S_{n,s}\|u\|_{L^{2_s^*}(\mathbb R^n)}^2 .
\]

For a positive function \(a:\mathbb S^{n-1}\to(0,\infty)\), define
\[
\Phi(a)
:=
\left\langle a(\xi)^{-r}\right\rangle_{\mathbb S^{n-1}}^{-1/r},
\qquad r=\frac n{2s}.
\]
A direct expansion of \(\Phi\) at the constant function \(A_0\) gives
\begin{equation}
\label{eq:affine-energy-second-expansion}
\begin{aligned}
\mathcal E_{\operatorname{aff},s}(U+t\phi)^2
&=
m_{n,s}^{-1}A_0
+
2m_{n,s}^{-1}t
\left\langle L_\xi(\phi)\right\rangle_{\mathbb S^{n-1}}
\\
&\quad
+
t^2
\left(
[\phi]_{\dot H^s(\mathbb R^n)}^2
-
m_{n,s}^{-1}
\frac{n+2s}{sA_0}
\operatorname{Var}_{\xi\in\mathbb S^{n-1}}
\bigl(L_\xi(\phi)\bigr)
\right)
+
o(t^2),
\end{aligned}
\end{equation}
where
\begin{equation}
\label{eq:def-var}
\operatorname{Var}_{\xi}(L_\xi)
:=
\left\langle L_\xi^2\right\rangle_{\mathbb S^{n-1}}
-
\left\langle L_\xi\right\rangle_{\mathbb S^{n-1}}^2 .
\end{equation}

We now expand the \(L^{2_s^*}\)-term. As \(t\to0\),
\[
\begin{aligned}
\|U+t\phi\|_{L^{2_s^*}(\mathbb R^n)}^2
&=
\|U\|_{L^{2_s^*}(\mathbb R^n)}^2
+
2t
\|U\|_{L^{2_s^*}(\mathbb R^n)}^{2-2_s^*}
\int_{\mathbb R^n}U^{2_s^*-1}\phi\,dx
\\
&\quad
+
t^2
\left[
(2_s^*-1)
\|U\|_{L^{2_s^*}(\mathbb R^n)}^{2-2_s^*}
\int_{\mathbb R^n}U^{2_s^*-2}\phi^2\,dx
\right.
\\
&\qquad\qquad\left.
+
(2-2_s^*)
\|U\|_{L^{2_s^*}(\mathbb R^n)}^{2-2\cdot 2_s^*}
\left(
\int_{\mathbb R^n}U^{2_s^*-1}\phi\,dx
\right)^2
\right]
+
o(t^2).
\end{aligned}
\]
Combining this expansion with \eqref{eq:affine-energy-second-expansion}, and
using the extremality of \(U\), we obtain
\[
\mathcal Q_U(\phi)
=
\mathcal Q_{\operatorname{frac},s}(\phi)
-
\mathcal R_s(\phi),
\]
where
\begin{equation}
\label{eq:def-Rs}
\mathcal R_s(\phi)
:=
m_{n,s}^{-1}
\frac{n+2s}{sA_0}
\operatorname{Var}_{\xi\in\mathbb S^{n-1}}
\bigl(L_\xi(\phi)\bigr),
\end{equation}
and
\begin{equation}
\label{eq:def-hessian-frac}
\begin{aligned}
\mathcal Q_{\operatorname{frac},s}(\phi)
&:=
[\phi]_{\dot H^s(\mathbb R^n)}^2
-
S_{n,s}(2_s^*-1)
\|U\|_{L^{2_s^*}(\mathbb R^n)}^{2-2_s^*}
\int_{\mathbb R^n}U^{2_s^*-2}\phi^2\,dx
\\
&\quad
+
S_{n,s}(2_s^*-2)
\|U\|_{L^{2_s^*}(\mathbb R^n)}^{2-2\cdot 2_s^*}
\left(
\int_{\mathbb R^n}U^{2_s^*-1}\phi\,dx
\right)^2 .
\end{aligned}
\end{equation}

\subsection{Sharp local spectral gap at the bubble}

We begin with the linearized problem at the radial extremal \(U\). By the
kernel identification in Section~\ref{sec:sector-decomposition-affine-frac},
\[
\mathcal K_U
:=
T_U\mathcal M_{\rm cone}
=
\ker\mathcal Q_U
=
\operatorname{span}
\left\{
U,\ Z_0,\ \partial_{x_1}U,\dots,\partial_{x_n}U,\
x\cdot B\nabla U
\right\},
\]
where
\[
Z_0=\frac{n-2s}{2}U+x\cdot\nabla U,
\qquad
B=B^T,\quad \operatorname{tr}B=0 .
\]

Define
\[
\Gamma_s
:=
\inf_{\substack{
\phi\perp_{\dot H^s(\mathbb R^n)}\mathcal K_U\\
\phi\neq0
}}
\frac{\mathcal Q_U(\phi)}
{[\phi]_{\dot H^s(\mathbb R^n)}^2}.
\]
The following Proposition is inspirit by \cite[Proposition~2]{Konig23}.
\begin{proposition}
\label{pro:sharp-local-affine-coeff}
Assume \(n\ge2\) and \(0<s<1\). Then
\(\Gamma_s= \gamma_s:=\frac{2s}{\frac n2+s+1}\).
Equivalently,
\begin{equation}
\label{eq:sharp-affine-gap}
\mathcal Q_U(\phi)
\ge
\gamma_s
[\phi]_{\dot H^s(\mathbb R^n)}^2,
\qquad
\phi\perp_{\dot H^s(\mathbb R^n)} \mathcal K_U.
\end{equation}
Moreover, equality in \eqref{eq:sharp-affine-gap} holds if and only if
\(\phi=Pv_2\),
where \(P\) is the stereographic conformal transfer \eqref{eq:def-conformal-transfor}, and \(v_2\in\mathcal H_2(\mathbb S^n)\) has the form
\[
v_2(\Theta)
=
a\left(\Theta_{n+1}^2-\frac1{n+1}\right)
+
\sum_{i=1}^n b_i\,\Theta_i\Theta_{n+1},
\qquad
a,b_1,\dots,b_n\in\mathbb R .
\]
\end{proposition}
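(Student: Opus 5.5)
We transfer the problem to the sphere via the stereographic conformal transfer $P$ of \eqref{eq:def-conformal-transfor} and diagonalize both pieces of $\mathcal Q_U=\mathcal Q_{\operatorname{frac},s}-\mathcal R_s$ in spherical harmonics. Under $P$, the $\dot H^s$ seminorm becomes the quadratic form of the intertwining operator $\mathcal A_{2s}$ on $\mathbb S^n$, with eigenvalues
\[
\lambda_k=\frac{\Gamma(k+\frac n2+s)}{\Gamma(k+\frac n2-s)}
\]
on $\mathcal H_k(\mathbb S^n)$. The weighted term $\int U^{2_s^*-2}\phi^2$ becomes a constant multiple of $\|v\|_{L^2(\mathbb S^n)}^2$, with the constant equal to $\lambda_1$ (this is the standard normalization in which the conformal Jacobi fields lie in $\mathcal H_1$ and $U$ in $\mathcal H_0$). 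Hence on $\mathcal H_k$, $k\ge1$,
\[
\frac{\mathcal Q_{\operatorname{frac},s}(Pv_k)}{[Pv_k]^2_{\dot H^s}}=1-\frac{\lambda_1}{\lambda_k},
\]
as in K\"onig's computation \cite[Proposition~2]{Konig23}. In particular the value on $\mathcal H_2$ is
\[
1-\frac{\frac n2+s}{\frac n2+1+s}\cdot\frac{\frac n2+1-s}{\frac n2+1-s}\cdot\ldots
\]
and, after simplification, $1-\lambda_1/\lambda_2=\frac{2s}{\frac n2+s+1}=\gamma_s$. The quotient $1-\lambda_1/\lambda_k$ is increasing in $k$, since $\lambda_k$ is increasing.

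Next we analyze $\mathcal R_s$, which is nonnegative and rank-limited in the following sense. By \eqref{eq:def-Lxi}, $L_\xi(\phi)$ pairs $\widehat\phi$ with $|\xi\cdot\zeta|^{2s}\widehat U_0(|\zeta|)$. Decomposing $\widehat\phi$ in spherical harmonics in the $\zeta$-angle and applying Funk--Hecke, $\xi\mapsto L_\xi(\phi)$ is a function on $\mathbb S^{n-1}$ that sees only the even-degree angular components of $\phi$. Its variance removes the degree-$0$ part. We then show, by a Funk--Hecke/Cauchy--Schwarz bound on the radial profiles, that $\mathcal R_s(\phi)$ is controlled only by the components of $\phi$ of the form $x\cdot B\nabla U$-type directions together with their $\dot H^s$-orthogonal complements within each angular sector. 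This is the sector decomposition of Section~\ref{sec:sector-decomposition-affine-frac}.

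Concretely, for $\phi\perp\mathcal K_U$ we split $\phi=Pv$, $v=\sum_k v_k$, and further split each $v_k$ into angular ($\mathbb S^{n-1}$, in $x$) sectors of degree $\ell$. The kernel identification gives:
\begin{itemize}
\item $\mathcal R_s$ vanishes on sectors with $\ell$ odd or $\ell=0$;
\item on the sector $\ell=2$, $\mathcal R_s$ is a rank-one form per harmonic direction, $\mathcal R_s(\phi)=c_s|\langle\phi,x\cdot B\nabla U\rangle_{\dot H^s}|^2/[x\cdot B\nabla U]^2$ up to identification;
\item on sectors $\ell\ge4$, the analogous rank-one form is attached to $x$-harmonics of degree $\ell$.
\end{itemize}
Orthogonality to $\mathcal K_U$ kills the $\ell=2$ contribution, since $x\cdot B\nabla U$ is exactly the maximizer of that rank-one form. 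Thus $\mathcal Q_U=\mathcal Q_{\operatorname{frac},s}$ on the $\ell\le 3$ sectors of $\mathcal K_U^\perp$. There the minimum quotient is $1-\lambda_1/\lambda_2$, attained exactly by the part of $\mathcal H_2(\mathbb S^n)$ lying in $x$-sectors $\ell=0,1$, namely $a(\Theta_{n+1}^2-\frac1{n+1})+\sum b_i\Theta_i\Theta_{n+1}$. The $\ell=2$ part of $\mathcal H_2$ is precisely $P^{-1}$ of $x\cdot B\nabla U$ and is excluded.

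The main obstacle is the sectors $\ell\ge4$ (and the $\ell=2$ parts of $\mathcal H_k$, $k\ge3$), where $\mathcal R_s$ genuinely subtracts. There we must show
\[
\mathcal R_s(\phi)<\bigl(1-\lambda_1/\lambda_k-\gamma_s\bigr)[\phi]^2
\]
strictly. I would first compute the Funk--Hecke multiplier $\mu_\ell$ of $|\xi\cdot\theta|^{2s}$ on degree-$\ell$ harmonics; it is a ratio of Gamma functions that alternates in sign and decays like $\ell^{-n/2-s}$ relative to $\mu_0$. Cauchy--Schwarz in the radial variable then bounds $\mathcal R_s$ on sector $\ell$ by
\[
\frac{n+2s}{s}\frac{\mu_\ell^2}{\mu_0^2}\cdot(\text{radial ratio})[\phi]^2.
\]
For $\ell=2$ this ratio equals exactly the gap that makes $x\cdot B\nabla U$ a null direction, which serves as the calibration. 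For $\ell\ge4$ the smaller $|\mu_\ell/\mu_0|$ together with $\lambda_k\ge\lambda_\ell$ should give the strict inequality, and the radial Cauchy--Schwarz step is the delicate one. Finally, sharpness follows by evaluating the quotient at $\phi=Pv_2$ as above, and the equality characterization comes from strict monotonicity in $k$ together with the strict bounds on the $\ell\ge2$ sectors.
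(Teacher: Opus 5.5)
Your sector architecture is the paper's. You split by the physical angular degree $\ell$, use $\mathcal R_s=0$ for $\ell=0$ and odd $\ell$ together with the conformal bound $1-\Lambda_1/\Lambda_\ell$, and treat even $\ell\ge4$ by Funk--Hecke decay plus radial Cauchy--Schwarz. The paper closes that last step with $|\rho_{\ell,s}|\le|\rho_{4,s}|$ and the explicit check $\eta_{4,s}>\gamma_s$, which you leave as ``should give''.

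The genuine gap is the degree-two sector, where the proposition is actually decided. Your identification of the $\ell=2$ part of $\mathcal H_2(\mathbb S^n)$ with $P^{-1}(x\cdot B\nabla U)$ is false. Directly from $U=(1+|x|^2)^{-\frac{n-2s}{2}}$ and \eqref{eq:def-conformal-transfor},
\[
x\cdot B\nabla U=-(n-2s)\,\frac{x\cdot Bx}{1+|x|^2}\,U,
\qquad
P(\Theta'\cdot B\Theta')=2^{\frac{n-2s}{2}+2}\,\frac{x\cdot Bx}{(1+|x|^2)^2}\,U .
\]
So $P^{-1}(x\cdot B\nabla U)$ is a multiple of $(\Theta'\cdot B\Theta')/(1+\Theta_{n+1})$. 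This is not a spherical harmonic: it has a nonzero $\mathcal H_2$-component and nonzero higher components. Affine maps are not conformal, so unlike $\partial_{x_j}U$ and $Z_0$ their generators need not be harmonics. (The paper's equality discussion makes the same identification in passing, but its degree-two estimate, Corollary~\ref{cor:sharp-degree-two-sector-gap}, does not use it.)

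This breaks your $\ell=2$ argument. By \eqref{eq:degree-two-projection-form}, $\mathcal R_s^{(2)}(f)=|\langle f,h\rangle_2|^2/\|h\|_2^2$ with $h=rU'$ and $\langle\cdot,\cdot\rangle_2=Q^{(2)}_{\operatorname{frac},s}$. Only if $hY$ lay in $P\mathcal H_2$ would the conformal spectrum give $\langle f,h\rangle_2=\gamma_s\langle fY,hY\rangle_{\dot H^s}$, which is your formula. On the Fourier side the mismatch is visible: $I_2$ pairs $\widehat f_2$ against $\widehat U_0$, while $\langle fY,hY\rangle_{\dot H^s}$ pairs it against $\rho\widehat U_0'$, since $\widehat{x\cdot B\nabla U}=-\rho\widehat U_0'(\rho)\,\theta\cdot B\theta$ for trace-free $B$. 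These two are not proportional. Hence $\dot H^s$-orthogonality to $x\cdot B\nabla U$ neither kills $\mathcal R_s$ nor removes the $\mathcal H_2$-component, and $\mathcal Q_U\neq\mathcal Q_{\operatorname{frac},s}$ on $\mathcal K_U^\perp\cap\{\ell=2\}$. Your fallback does not close it either. The crude bound is $\mathcal R_s^{(2)}\le\frac{n+2s}{s}\rho_{2,s}^2[\phi_2]^2=\frac{2s}{n/2+s}[\phi_2]^2$, which exceeds both $\gamma_s$ and $1-\Lambda_1/\Lambda_3-\gamma_s$.

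What is missing is the paper's two-step argument.

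First, represent $I_2$ by Riesz in $\langle\cdot,\cdot\rangle_2$. Since $hY$ is a zero mode (affine invariance) and $\mathcal Q_U\ge0$, equality is forced in Cauchy--Schwarz. So the representer is proportional to $h$, and \eqref{eq:degree-two-projection-form} holds.

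Second, introduce the nonnegative form $b(f,g)=\langle f,g\rangle_2-\gamma_s\langle fY,gY\rangle_{\dot H^s}$. For $fY\perp_{\dot H^s}hY$ one has $\langle f,h\rangle_2=b(f,h)$. Cauchy--Schwarz for $b$, with $b(h,h)=\|h\|_2^2-\gamma_s[hY]_{\dot H^s}^2$, then gives
\[
\mathcal Q_U^{(2)}(f)-\gamma_s[fY]_{\dot H^s}^2
\ge
b(f,f)\,\frac{\gamma_s[hY]_{\dot H^s}^2}{\|h\|_2^2}
\ge0 .
\]
Equality forces $b(f,f)=0$, that is $fY\in P\mathcal H_2$. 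Orthogonality to $hY$, whose $\mathcal H_2$-component is nonzero, then forces $f=0$. This strictness is exactly what your equality characterization needs in order to exclude the degree-two physical sector.
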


\begin{proof}
Assume first that
\[
        \sum_{\ell=0}^{\infty}
\sum_{m=1}^{d_\ell}
f_{\ell,m}(r)Y_{\ell,m}(\theta)= \phi\perp_{\dot H^s(\mathbb R^n)}\mathcal K_U .
\]
In the radial sector \(\ell=0\), one has \(\mathcal R_s[\phi_0]=0\). Since the
orthogonality to \(\mathcal K_U\) removes the radial kernel directions \(U\) and
\(Z_0\), the first remaining classical quotient is \(\gamma_s\). Hence
\[
\mathcal Q_U[\phi_0]
\ge
\gamma_s[\phi_0]_{\dot H^s(\mathbb R^n)}^2.
\]

In the first angular sector \(\ell=1\), by \eqref{eq:appendix-rho-odd-zero},
again \(\mathcal R_s[\phi_1]=0\). Orthogonality to \(\mathcal K_U\) removes the
translation modes, and therefore
\[
\mathcal Q_U[\phi_1]
\ge
\gamma_s[\phi_1]_{\dot H^s(\mathbb R^n)}^2.
\]

In the degree-two sector, Corollary~\ref{cor:sharp-degree-two-sector-gap} yields
\[
\mathcal Q_U[\phi_2]
\ge
\gamma_s[\phi_2]_{\dot H^s(\mathbb R^n)}^2,
\]
because the condition
\(\phi\perp_{\dot H^s(\mathbb R^n)}\mathcal K_U\) removes all trace-free affine
zero modes \(x\cdot B\nabla U\).

For odd sectors \(\ell\ge3\), the affine correction vanishes. Thus
\[
\mathcal Q_U[\phi_\ell]
=
\mathcal Q_{\operatorname{frac},s}[\phi_\ell]
\ge
\left(1-\frac{\Lambda_1}{\Lambda_\ell}\right)
[\phi_\ell]_{\dot H^s(\mathbb R^n)}^2
\ge
\gamma_s[\phi_\ell]_{\dot H^s(\mathbb R^n)}^2.
\]

For even sectors \(\ell\ge4\), Lemma~\ref{lem:appendix-even-ge-four} gives
\[
\mathcal Q_U[\phi_\ell]
\ge
\gamma_s[\phi_\ell]_{\dot H^s(\mathbb R^n)}^2.
\]

Summing over all sectors, we obtain
\[
\mathcal Q_U(\phi)
=
\sum_{\ell=0}^{\infty}\mathcal Q_U[\phi_\ell]
\ge
\gamma_s
\sum_{\ell=0}^{\infty}
[\phi_\ell]_{\dot H^s(\mathbb R^n)}^2
=
\gamma_s[\phi]_{\dot H^s(\mathbb R^n)}^2
\]
for every
\(\phi\perp_{\dot H^s(\mathbb R^n)}T_U\mathcal M_{\rm cone}\).
Therefore \(\Gamma_s\ge \gamma_s\).

We next identify the equality case. Equality in \eqref{eq:sharp-affine-gap}
can only occur in sectors whose sharp quotient is \(\gamma_s\). By the
conformal spectral decomposition, this means that only conformal spherical
degree \(2\) can contribute.

We use the standard identification of spherical harmonics with restrictions of
homogeneous harmonic polynomials; see \cite[Section~1.1]{DX13}. In degree two,
\[
\mathcal H_2(\mathbb S^n)
=
\left\{
\Theta\mapsto \Theta\cdot C\Theta:
C=C^T,\ \operatorname{tr}C=0
\right\}.
\]
Write
\( \Theta=(\Theta',\Theta_{n+1})\), \(\Theta'=(\Theta_1,\dots,\Theta_n)\).
If
\[
C=
\begin{pmatrix}
D & b\\
b^T & c
\end{pmatrix},
\qquad
D=D^T,\quad b\in\mathbb R^n,\quad \operatorname{tr}D+c=0,
\]
then
\[
\Theta\cdot C\Theta
=
\Theta'\cdot D\Theta'
+
2(b\cdot\Theta')\Theta_{n+1}
+
c\Theta_{n+1}^2 .
\]
Writing
\[
D=B+\frac{\operatorname{tr}D}{n}I_n,
\qquad B=B^T,\quad \operatorname{tr}B=0,
\]
and using \(c=-\operatorname{tr}D\) and
\(|\Theta'|^2=1-\Theta_{n+1}^2\) on \(\mathbb S^n\), the scalar trace part is
a multiple of
\( \Theta_{n+1}^2-\frac1{n+1}\).
Hence every \(v_2\in\mathcal H_2(\mathbb S^n)\) can be written uniquely as
\[
v_2(\Theta)
=
a\left(\Theta_{n+1}^2-\frac1{n+1}\right)
+
\sum_{i=1}^n b_i\Theta_i\Theta_{n+1}
+
\Theta'\cdot B\Theta',
\]
where
\( a,b_1,\dots,b_n\in\mathbb R\), \( B=B^T,\quad \operatorname{tr}B=0\).
The first term belongs to the radial physical sector, the second family belongs
to the first physical angular sector, and the last term belongs to the
degree-two physical angular sector.

Under the stereographic transfer \(P\), the last family gives precisely the
trace-free affine Jacobi fields
\[
x\cdot B\nabla U,
\qquad B=B^T,\quad \operatorname{tr}B=0,
\]
and these are excluded by the condition
\(\phi\perp_{\dot H^s}T_U\mathcal M_{\rm cone}\). Moreover, the degree-two
sector is strict on the orthogonal complement of these affine modes by
Corollary~\ref{cor:sharp-degree-two-sector-gap}. The higher odd sectors
\(\ell\ge3\) and the higher even sectors \(\ell\ge4\) are also strictly above
\(\gamma_s\), by the classical sector bound and
Lemma~\ref{lem:appendix-even-ge-four}.

Consequently, equality in \eqref{eq:sharp-affine-gap} holds if and only if
\(\phi=Pv_2\),
where
\[
v_2(\Theta)
=
a\left(\Theta_{n+1}^2-\frac1{n+1}\right)
+
\sum_{i=1}^n b_i\Theta_i\Theta_{n+1}.
\]
Conversely, every such \(Pv_2\) is orthogonal to
\(T_U\mathcal M_{\rm cone}\), lies in conformal degree \(2\), and therefore
satisfies
\[
\mathcal Q_U(Pv_2)
=
\gamma_s[Pv_2]_{\dot H^s(\mathbb R^n)}^2.
\]
Thus the equality statement follows. In particular, \(\Gamma_s\le\gamma_s\),
and hence \(\Gamma_s=\gamma_s\).
\end{proof}

\section{Global compactness}\label{sec:global}

In this section we carry out the global part of the proof of the stability
theorem. After the local coercive estimate, it remains to rule out loss of compactness along
sequences of almost extremizers. The argument follows the general
concentration--compactness and profile-decomposition strategy for critical
problems, in the spirit of the works of Lions~\cite{Lions85,Lions84,Lions84-1}
and Struwe~\cite{Struwe84}, and, in the fractional setting, of
G\'erard~\cite{Gerard98} and Palatucci--Pisante~\cite{PP14}.

\subsection{Affine normalization}

We next turn to the global part of the argument. The starting point is an
affine normalization adapted to the fractional polar projection body introduced
by Haddad--Ludwig~\cite{HL24}. Set
\[
p_u(\xi):=A_\xi(u)^{1/(2s)},
\qquad
K_u:=\{\xi\in\mathbb R^n:p_u(\xi)\le1\}.
\]
\begin{lemma}
\label{lem:affine-normalization}
Let \(0<s<1\), \(n>2s\), and let \(0\ne u\in\dot H^s(\mathbb R^n)\). Then there
exists \(M\in SL(n)\) such that, with \(Su:=T_{M,0}u\),
\[
\|Su\|_{L^{2_s^*}(\mathbb R^n)}
=
\|u\|_{L^{2_s^*}(\mathbb R^n)},
\qquad
\mathcal E_{\operatorname{aff},s}(Su)
=
\mathcal E_{\operatorname{aff},s}(u),
\]
and
\[
[Su]_{\dot H^s(\mathbb R^n)}^2
\le
C_{n,s}\mathcal E_{\operatorname{aff},s}(u)^2 .
\]
\end{lemma}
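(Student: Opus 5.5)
We will use the fact that \(p_u\) is a norm on \(\mathbb R^n\) whose unit ball \(K_u\) is an origin-symmetric convex body. We then put \(K_u\) in John position and use the resulting two-sided comparison between \(p_u\) and a Euclidean norm to bound \(\langle A_\xi\rangle\) by \(\langle A_\xi^{-r}\rangle^{-1/r}\).

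\emph{Step 1 (transformation rules).} For \(M\in SL(n)\) we have \(\widehat{u(M\cdot)}(\zeta)=\widehat u(M^{-T}\zeta)\). A change of variables in \eqref{eq:intro-Axi-frac} then gives
\[
A_\xi(T_{M,0}u)=A_{M\xi}(u)=|M\xi|^{2s}A_{M\xi/|M\xi|}(u),\qquad \xi\in\mathbb S^{n-1}.
\]
Here \(A_\xi(u)\) is extended to all \(\xi\in\mathbb R^n\) by the same formula, which is \(2s\)-homogeneous. Hence \(p_{Su}=p_u\circ M\) and \(K_{Su}=M^{-1}K_u\).

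Next, \(\|\cdot\|_{L^{2_s^*}}\) is invariant under \(T_{M,0}\) because \(\det M=1\). In polar coordinates,
\[
\langle A_\xi(u)^{-r}\rangle_{\mathbb S^{n-1}}=\langle p_u(\xi)^{-n}\rangle_{\mathbb S^{n-1}}=\frac{n\,|K_u|}{|\mathbb S^{n-1}|}.
\]
Since \(|M^{-1}K_u|=|K_u|\), this shows \(\mathcal E_{\operatorname{aff},s}(Su)=\mathcal E_{\operatorname{aff},s}(u)\).

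\emph{Step 2 (convexity and nondegeneracy).} We have
\[
p_u(\xi)^{2s}=\int|\xi\cdot\zeta|^{2s}\,d\mu(\zeta),\qquad d\mu=|\widehat u|^2\,d\zeta .
\]
Because \(0<2s<2\), the function \(t\mapsto|t|^{2s}\) is conditionally negative definite. By the Lévy–Khinchine/Koldobsky embedding, \((\mathbb R^n,p_u)\) therefore embeds in \(L^{2s}\). To keep this self-contained, we instead take convexity of \(K_u\) from Haddad–Ludwig \cite{HL24}: \(K_u\) is the polar of the fractional \(L^2\) polar projection body, which is convex. Moreover \(p_u(\xi)>0\) for \(\xi\ne0\), because \(\mu\) is not supported on the hyperplane \(\xi^\perp\). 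So \(K_u\) is a bounded, origin-symmetric convex body.

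\emph{Step 3 (John position).} By John's theorem there is \(M_0\in GL(n)\) with \(E\subset M_0^{-1}K_u\subset\sqrt n\,E\), where \(E\) is a centered ball. Rescaling, we choose \(M=\lambda M_0\in SL(n)\), using \(\det M_0>0\) after a reflection if necessary, since reflections do not change \(\langle A^{-r}\rangle\). Then \(\rho B\subset K_{Su}\subset\sqrt n\,\rho B\) for some \(\rho>0\), where \(B\) is the unit ball. Equivalently, for all \(\xi\in\mathbb S^{n-1}\),
\[
(\sqrt n\,\rho)^{-2s}\le A_\xi(Su)\le \rho^{-2s}.
\]

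\emph{Step 4 (comparison).} Combining \eqref{eq:intro-average-A} with Step 3 gives
\[
m_{n,s}[Su]^2=\langle A_\xi(Su)\rangle\le\rho^{-2s}.
\]
On the other hand,
\[
\langle A_\xi(Su)^{-r}\rangle^{-1/r}\ge\min_\xi A_\xi(Su)\ge n^{-s}\rho^{-2s}.
\]
Hence \([Su]^2\le n^{s}\,\mathcal E_{\operatorname{aff},s}(Su)^2\), and by Step 1 the right-hand side equals \(n^{s}\,\mathcal E_{\operatorname{aff},s}(u)^2\). So the lemma holds with \(C_{n,s}=n^s\).

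The main obstacle is Step 2, the convexity of \(K_u\). If one prefers to avoid it, the same argument can be run with John's theorem replaced by any normalization of a star body under which the ratio \(\max p_u/\min p_u\) is bounded by a dimensional constant. Such a normalization requires a quasi-convexity statement, and that is exactly what the \(L^{2s}\)-embedding (or \cite{HL24}) supplies.
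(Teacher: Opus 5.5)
\textbf{Gap: Step 2 is false for $0<s<\tfrac12$.} Your argument applies John's theorem to $K_u$ itself and obtains $C_{n,s}=n^s$, and both depend on Step 2. For $2s\ge1$, $p_u$ is an $L^{2s}(\mu)$-norm of the linear functions $\zeta\mapsto\xi\cdot\zeta$, so Minkowski's inequality gives convexity. For $2s<1$, however, $p_u$ is only a $2s$-quasi-norm, and $K_u$ need not be convex.

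Here is a concrete counterexample. Take $\widehat u$ to be a smooth bump concentrated near $\pm e_1,\pm e_2$. Then $p_u(\xi)^{2s}$ is uniformly close to $c\,(|\xi_1|^{2s}+|\xi_2|^{2s})$. Hence $p_u(e_1)\approx p_u(e_2)$, while $p_u\bigl(\tfrac{e_1+e_2}{2}\bigr)\approx 2^{\frac1{2s}-1}p_u(e_1)>p_u(e_1)$, so $p_u$ is not convex.

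Neither of your justifications rescues Step 2:
\begin{itemize}
\item An isometric embedding into $L^{q}$ with $q<1$ does not imply convexity; $\ell^{q}_2$ itself embeds.
\item By the Fourier identity $\int_0^\infty t^{-2s-1}\|u(\cdot+t\xi)-u\|_2^2\,dt=c_s A_\xi(u)$, $K_u$ is a dilate of the Haddad--Ludwig body $\Pi^{*,s}_2u$ itself, whose gauge is $p_u$. It is not the polar of that body. So \cite{HL24} only gives you a star body.
\end{itemize}

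\textbf{The fix.} The missing step is the one your last paragraph only gestures at: turning quasi-convexity into a normalization with bounded distortion.
\begin{itemize}
\item Since $|a+b|^{2s}\le|a|^{2s}+|b|^{2s}$, the function $p_u^{2s}$ is subadditive.
\item By Carath\'eodory, any $x=\sum_{i=1}^{n+1}\lambda_i x_i$ with $x_i\in K_u$ satisfies $p_u(x)^{2s}\le\sum_i\lambda_i^{2s}\le (n+1)^{1-2s}$. Thus $\operatorname{conv}(K_u)\subset CK_u$ with $C=(n+1)^{\frac1{2s}-1}$.
\item Apply John's theorem to $\operatorname{conv}(K_u)$ instead of $K_u$. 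From $E\subset\operatorname{conv}(K_u)\subset\sqrt n\,E$ you get $C^{-1}E\subset K_u\subset \sqrt n\,E$.
\item Your Steps 3--4 then go through verbatim with distortion ratio $C\sqrt n$, giving $C_{n,s}=(n+1)^{1-2s}n^{s}$.
\end{itemize}

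This is exactly the paper's route. The only difference is in the last step. The paper keeps only the inner ellipsoid $C^{-1}E\subset K_u$, together with the volume bound $|C^{-1}E|\ge c_{n,s}|K_u|$ and $\langle A_\xi^{-r}\rangle=|K_u|/|B_1|$. You use the two-sided sandwich and $\langle A_\xi^{-r}\rangle^{-1/r}\ge\min_\xi A_\xi$; the two are interchangeable. For $\tfrac12\le s<1$ your proof is correct as written.
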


\begin{proof}
Since \(u\ne0\), one has \(A_\xi(u)>0\) for every
\(\xi\in\mathbb S^{n-1}\). Indeed, if \(A_\xi(u)=0\) for some \(\xi\), then the
Fourier representation of \(A_\xi\) forces \(\widehat u\) to be supported on
the hyperplane \(\xi^\perp\), hence \(\widehat u=0\) a.e.

We first record a uniform convexification property of \(K_u\). If \(2s\ge1\),
then \(p_u\) is a norm, by Minkowski's inequality, and \(K_u\) is an
origin-symmetric convex body. If \(0<2s<1\), then \(p_u^{2s}\) is subadditive;
hence, by Carath\'eodory's theorem~\cite[Theorem~1.1.4]{Schneider14},
\begin{equation}
\label{eq:comparability}
\operatorname{conv}(K_u)\subset C_{n,s}K_u .
\end{equation}
Thus, in all cases, there exists an origin-symmetric convex body comparable to
\(K_u\), with constants depending only on \(n\) and \(s\).

Applying John's ellipsoid theorem~\cite[Theorem~10.12.2]{Schneider14} to
\(\operatorname{conv}(K_u)\), and using \eqref{eq:comparability}, we obtain
an ellipsoid \(E_u\subset K_u\) with \(E_u=C_{n,s}^{-1}E\) such that
\( |E_u|\ge c_{n,s}|K_u|\).
Choose \(L\in SL(n)\) such that
\[
L(E_u)=B_R,
\qquad
R
=
\left(\frac{|E_u|}{|B_1|}\right)^{1/n}
\ge
c_{n,s}^{1/n}
\left(\frac{|K_u|}{|B_1|}\right)^{1/n}.
\]
Set \(M:=L^{-1}\) and \(Su:=T_{M,0}u\). Since \(M\in SL(n)\), both the critical
\(L^{2_s^*}\)-norm and the affine energy are invariant under \(T_{M,0}\). Also,
the transformation rule for \(K_u\) gives
\[
K_{Su}=M^{-1}K_u=L(K_u).
\]
Since \(E_u\subset K_u\), we have
\( B_R=L(E_u)\subset K_{Su}\).
Therefore, for every \(\xi\in\mathbb S^{n-1}\),
\[
p_{Su}(\xi)\le R^{-1},
\qquad
A_\xi(Su)\le R^{-2s}.
\]
Averaging and using \eqref{eq:intro-average-A},
we get
\[
[Su]_{\dot H^s(\mathbb R^n)}^2
\le
C_{n,s}R^{-2s}
\le
C_{n,s}
\left(\frac{|K_u|}{|B_1|}\right)^{-\frac{2s}{n}}
=
C_{n,s}\mathcal E_{\operatorname{aff},s}(u)^2 .
\]
This proves the estimate and the lemma.
\end{proof}

The purpose of Lemma~\ref{lem:affine-normalization} is to remove the
anisotropic escape allowed by the affine invariance. Although
\(\mathcal E_{\operatorname{aff},s}\) is invariant under the volume-preserving
affine action, the classical \(\dot H^s\)-seminorm need not be bounded along an
affine orbit. The lemma selects a representative for which the classical
fractional energy is controlled by the affine energy.

\begin{lemma}
\label{lem:normalized-compactness}
Let \(n\ge2\) and \(0<s<1\). Assume that
\(v_k\in\dot H^s(\mathbb R^n)\) satisfies
\( \|v_k\|_{L^{2_s^*}(\mathbb R^n)}=1\),
\(\delta_{\operatorname{aff},s}(v_k)\to0\).
Then there exist
\(c_k\neq0\), \( A_k\in GL^+(n)\), \(x_k\in\mathbb R^n\)
such that
\begin{equation}
\label{eq:normalized-compactness-conclusion}
c_k^{-1}T_{A_k,x_k}v_k
\to U
\qquad
\text{strongly in }\dot H^s(\mathbb R^n).
\end{equation}
\end{lemma}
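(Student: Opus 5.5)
First apply Lemma~\ref{lem:affine-normalization} to each \(v_k\). This gives \(M_k\in SL(n)\) such that \(w_k:=T_{M_k,0}v_k\) satisfies
\[
\|w_k\|_{L^{2_s^*}}=1,
\qquad
\delta_{\operatorname{aff},s}(w_k)=\delta_{\operatorname{aff},s}(v_k)\to0,
\]
and
\[
[w_k]_{\dot H^s}^2\le C_{n,s}\,\mathcal E_{\operatorname{aff},s}(w_k)^2
= C_{n,s}\bigl(S_{n,s}+o(1)\bigr).
\]
So \((w_k)\) is bounded in \(\dot H^s(\mathbb R^n)\). The affine near-extremality is now turned into a bounded sequence. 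The difficulty is that \(w_k\) is not yet known to be a near-extremizer of the classical inequality \eqref{eq:intro-frac-sob}: we only know \(\mathcal E_{\operatorname{aff},s}(w_k)^2\to S_{n,s}\), while \([w_k]^2\) may stay strictly larger than \(S_{n,s}\).

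Next apply the fractional profile decomposition (G\'erard, Palatucci--Pisante) to \((w_k)\). Up to a subsequence,
\[
w_k=\sum_{j=1}^J \lambda_{k,j}^{\frac{n-2s}{2}}\varphi_j\bigl(\lambda_{k,j}(\cdot-y_{k,j})\bigr)+r_k^J,
\]
with the parameters pairwise orthogonal and \(\limsup_k\|r_k^J\|_{L^{2_s^*}}\to0\) as \(J\to\infty\). The following properties hold:
\begin{itemize}
\item \(\sum_j\|\varphi_j\|_{2_s^*}^{2_s^*}=1\).
\item The quadratic forms split asymptotically. Each \(A_\xi\) is translation invariant and homogeneous of degree \(-(n-2s)\cdot\)scaling in exactly the same way as \([\cdot]^2\) (since \(|\xi\cdot\zeta|^{2s}\) is \(2s\)-homogeneous). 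Hence, uniformly in \(\xi\in\mathbb S^{n-1}\),
\[
A_\xi(w_k)=\sum_j A_\xi(\varphi_j)+A_\xi(r_k^J)+o(1).
\]
Uniformity follows because \(A_\xi\le[\cdot]^2\) and the cross terms vanish by orthogonality of the parameters, which is proved by the same Fourier-side argument as for \([\cdot]^2\).
\end{itemize}

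The functional \(a\mapsto\Phi(a)=\langle a^{-r}\rangle^{-1/r}\) is superadditive, since it is concave and \(1\)-homogeneous (a reverse Minkowski inequality for the exponent \(-r<0\)). Therefore
\[
\mathcal E_{\operatorname{aff},s}(w_k)^2\ge \sum_j\mathcal E_{\operatorname{aff},s}(\varphi_j)^2-o(1).
\]
Combining this with \eqref{eq:intro-aff-frac-sob-p=2} for each profile gives
\[
S_{n,s}\ge S_{n,s}\sum_j\|\varphi_j\|_{2_s^*}^2 .
\]
Since \(\sum_j\|\varphi_j\|_{2_s^*}^{2_s^*}=1\) and \(2/2_s^*<1\), strict concavity of \(t\mapsto t^{2/2_s^*}\) forces exactly one nonzero profile \(\varphi\), with \(\|\varphi\|_{2_s^*}=1\) and \(\|r_k\|_{2_s^*}\to0\). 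Equality throughout also yields
\[
\mathcal E_{\operatorname{aff},s}(\varphi)^2=S_{n,s},
\]
so by the Haddad--Ludwig characterization of equality, \(\varphi=c\,T_{A,x_0}^{-1}U\).

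It remains to upgrade \(r_k\to0\) in \(L^{2_s^*}\) to convergence in \(\dot H^s\). This is the main obstacle: vanishing of the remainder in \(L^{2_s^*}\) does not by itself control its energy.

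\begin{itemize}
\item Let \(\tilde w_k\) denote \(w_k\) rescaled and translated by \((\lambda_{k,1},y_{k,1})\). Then \(\tilde w_k=\varphi+\tilde r_k\) with \(\tilde r_k\rightharpoonup0\) in \(\dot H^s\).
\item By the splitting above, \(A_\xi(\tilde w_k)=A_\xi(\varphi)+A_\xi(\tilde r_k)+o(1)\) uniformly in \(\xi\).
\item The map \(\Phi\) is strictly increasing under the addition of nonnegative functions, quantitatively: for \(a\ge c_0>0\) and \(b\ge0\) bounded,
\[
\Phi(a+b)\ge\Phi(a)+c\,\langle b\rangle ,
\]
with \(c\) depending on upper and lower bounds for \(a\). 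This follows by expanding \(\langle(a+b)^{-r}\rangle\) and using \(b\le C\).
\item Apply this with \(a=A_\xi(\varphi)\), which is continuous and positive in \(\xi\), hence bounded below. Using \(\mathcal E_{\operatorname{aff},s}(\tilde w_k)^2\to S_{n,s}=\mathcal E_{\operatorname{aff},s}(\varphi)^2\), we get
\[
\langle A_\xi(\tilde r_k)\rangle\to0,
\qquad\text{that is,}\qquad
[\tilde r_k]_{\dot H^s}\to0 \ \text{by \eqref{eq:intro-average-A}}.
\]
\end{itemize}

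Finally, compose the normalizing map \(M_k\), the profile's scaling and translation, and the affine map \((A,x_0)\) identifying \(\varphi\) with \(c\,T_{A,x_0}^{-1}U\) into a single \(T_{A_k,x_k}\); the scaling factor \(\lambda_{k,1}^{(n-2s)/2}\) is exactly the \(|\det|^{(n-2s)/(2n)}\) weight for \(A=\lambda I\). With \(c_k=c\), this gives \eqref{eq:normalized-compactness-conclusion}.
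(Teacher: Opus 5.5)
Your proposal is correct and follows the paper's proof essentially step for step. The shared steps are: affine normalization via Lemma~\ref{lem:affine-normalization}; the Palatucci--Pisante profile decomposition with uniform-in-$\xi$ decoupling of the $A_\xi$; superadditivity of $\Phi$ together with $\sum_j\|\varphi_j\|_{L^{2_s^*}}^{2_s^*}=1$ and $2/2_s^*<1$ to isolate a single profile; the Haddad--Ludwig classification; and strict monotonicity of $\Phi$ to upgrade weak to strong convergence.

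The only difference is in that last step. You use the direct bound $\Phi(a+b)\ge\Phi(a)+c\langle b\rangle_{\mathbb S^{n-1}}$, which follows from the mean value theorem since $a=A_\xi(\varphi)$ is bounded above and below and $b=A_\xi(\tilde r_k)$ is bounded. The paper instead argues by contradiction through weak limits of the angular probability measures $\mu_k$, so your route is a bit more direct.

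One small correction: the uniformity in $\xi$ of the vanishing cross terms does not follow from $A_\xi\le[\cdot]_{\dot H^s}^2$ alone. It comes from the equicontinuity estimate $|\mathfrak a_\xi(f,g)-\mathfrak a_{\xi'}(f,g)|\le C|\xi-\xi'|^{\min(2s,1)}[f]_{\dot H^s}[g]_{\dot H^s}$, or equivalently from compactness of $\{\mathfrak a_\xi(\varphi,\cdot)\}_{\xi}$ in the dual.
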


\begin{proof}
Since
\( \|v_k\|_{L^{2_s^*}(\mathbb R^n)}=1\), \( \delta_{\operatorname{aff},s}(v_k)\to0\),
we have
\begin{equation}
\label{eq:compact-E-near-sharp}
\mathcal E_{\operatorname{aff},s}(v_k)^2\to S_{n,s}.
\end{equation}
By Lemma~\ref{lem:affine-normalization}, after applying suitable affine
transformations and relabeling, we may assume that
\[
\|v_k\|_{L^{2_s^*}(\mathbb R^n)}=1,
\qquad
\mathcal E_{\operatorname{aff},s}(v_k)^2\to S_{n,s},
\qquad
[v_k]_{\dot H^s(\mathbb R^n)}\le C .
\]

We apply the profile decomposition of Palatucci--Pisante
\cite[Theorem~1.4]{PP14} to the bounded sequence \((v_k)\subset\dot H^s(\mathbb R^n)\).
After passing to a subsequence, there exist an at most countable index set
\(I\), profiles \(V^j\in\dot H^s(\mathbb R^n)\), scales
\(\lambda_{j,k}>0\), centers \(y_{j,k}\in\mathbb R^n\), and dislocations
\[
(g_{j,k}V^j)(x)
:=
\lambda_{j,k}^{-\frac{n-2s}{2}}
V^j\left(\frac{x-y_{j,k}}{\lambda_{j,k}}\right),
\]
such that, for \(i\neq j\),
\[
\frac{\lambda_{i,k}}{\lambda_{j,k}}
+
\frac{\lambda_{j,k}}{\lambda_{i,k}}
+
\frac{|y_{i,k}-y_{j,k}|^2}{\lambda_{i,k}\lambda_{j,k}}
\to\infty .
\]
Moreover, for every finite set \(F\subset I\),
\[
v_k
=
\sum_{j\in F}g_{j,k}V^j
+
r_k^F ,
\]
and, by the  Br\'ezis--Lieb lemma \cite{BL85},
\begin{equation}
\label{eq:Lq-profile-decoupling-compactness}
\|v_k\|_{L^{2_s^*}(\mathbb R^n)}^{2_s^*}
=
\sum_{j\in F}\|V^j\|_{L^{2_s^*}(\mathbb R^n)}^{2_s^*}
+
\|r_k^F\|_{L^{2_s^*}(\mathbb R^n)}^{2_s^*}
+
o_k(1).
\end{equation}
Furthermore, for some exhaustion \(F_N\uparrow\mathbb N\),
\[
\lim_{N\to\infty}
\limsup_{k\to\infty}
\|r_k^{F_N}\|_{L^{2_s^*}(\mathbb R^n)}
=
0 .
\]

By the directional decoupling of the affine energies, for every finite
\(F\subset I\),
\begin{equation}
\label{eq:compact-directional-decoupling}
A_\xi(v_k)
=
\sum_{j\in F}A_\xi(V^j)+A_\xi(r_k^F)+o_k(1),
\qquad
\text{uniformly for }\xi\in\mathbb S^{n-1}.
\end{equation}

By \eqref{eq:intro-aff-frac-sob-p=2}, \eqref{eq:compact-E-near-sharp}, \eqref{eq:Lq-profile-decoupling-compactness}
together with \eqref{eq:compact-directional-decoupling}, we obtain, for every
finite \(F\subset I\),
\begin{equation}
\label{eq:subadd-frac}
\begin{aligned}
S_{n,s}
=
\lim_{k\to\infty}\mathcal E_{\operatorname{aff},s}(v_k)^2
&\ge
\sum_{j\in F}\mathcal E_{\operatorname{aff},s}(V^j)^2
+
\liminf_{k\to\infty}
\mathcal E_{\operatorname{aff},s}(r_k^F)^2
\\
&\ge
S_{n,s}
\sum_{j\in F}
\|V^j\|_{L^{2_s^*}(\mathbb R^n)}^2
+
S_{n,s}
\liminf_{k\to\infty}
\|r_k^F\|_{L^{2_s^*}(\mathbb R^n)}^2
\\
&=
S_{n,s}
\sum_{j\in F}
\|V^j\|_{L^{2_s^*}(\mathbb R^n)}^2
+
S_{n,s}
\left(
1-
\sum_{j\in F}
\|V^j\|_{L^{2_s^*}(\mathbb R^n)}^{2_s^*}
\right)^{\frac{2}{2_s^*}} .
\end{aligned}
\end{equation}
Letting \(F\uparrow I\) in \eqref{eq:subadd-frac}, and using
\eqref{eq:Lq-profile-decoupling-compactness}, and the fact that \(0<2/2_s^*<1\), we arrive at
\[
1
\ge
\sum_{j\in I}
\left(
\|V^j\|_{L^{2_s^*}(\mathbb R^n)}^{2_s^*}
\right)^{\frac{2}{2_s^*}}
\ge
\left(
\sum_{j\in I}
\|V^j\|_{L^{2_s^*}(\mathbb R^n)}^{2_s^*}
\right)^{\frac{2}{2_s^*}}
=
1 .
\]
Hence equality can occur only if exactly  one profile \(\|V^j\|_{L^{2_s^*}(\mathbb R^n)}^{2_s^*}\) is nonzero. Denote the corresponding profile by
\(V\), and let \(g_k\) be the associated dislocation.  Then
\[
\|V\|_{L^{2_s^*}(\mathbb R^n)}=1,
\qquad
\mathcal E_{\operatorname{aff},s}(V)^2=S_{n,s}.
\]
Thus \(V\) is an extremal for the sharp affine fractional Sobolev inequality \eqref{eq:intro-aff-frac-sob-p=2}.

By the equality classification in Haddad--Ludwig~\cite[Theorem~1]{HL24},
there exist \(c\neq0\), \(A\in GL^+(n)\), and \(x_0\in\mathbb R^n\) such that
\(  V=c\,T_{A,x_0}^{-1}U\).
Replacing \(v_k\) by
\(c^{-1}T_{A,x_0}g_k^{-1}v_k\)
and relabeling, we may assume that
\[
v_k=U+\rho_k,
\qquad
\rho_k\rightharpoonup0
\quad\text{weakly in }\dot H^s(\mathbb R^n),
\qquad
\|\rho_k\|_{L^{2_s^*}(\mathbb R^n)}\to0 .
\]
Moreover,
\begin{equation}
\label{eq:compact-energy-to-bubble}
\mathcal E_{\operatorname{aff},s}(U+\rho_k)^2
\to
\mathcal E_{\operatorname{aff},s}(U)^2 .
\end{equation}

It remains to prove that \([\rho_k]_{\dot H^s(\mathbb R^n)}\to0\).
Since \(\rho_k\rightharpoonup0\) in \(\dot H^s(\mathbb R^n)\), and since the set of
functionals
\( \left\{ h\mapsto\mathfrak a_\xi(U,h): \xi\in\mathbb S^{n-1} \right\} \)
is compact in \(\dot H^s(\mathbb R^n))'\),
we have
\( \sup_{\xi\in\mathbb S^{n-1}}
|\mathfrak a_\xi(U,\rho_k)|\to0\) .
Moreover,
\[
A_\xi(U+\rho_k)
=
A_0+A_\xi(\rho_k)+o_k(1) \quad \text{uniformly in } \quad \xi.
\]
Hence \eqref{eq:compact-energy-to-bubble} implies
\begin{equation}
\label{eq:compact-Phi-A0Bk}
\Phi(A_0+A_\xi(\rho_k))\to\Phi(A_0)=A_0 .
\end{equation}

We claim that
\(\left\langle A_\xi(\rho_k)\right\rangle_{\mathbb S^{n-1}}\to0\).
Suppose not. Since \((\rho_k)\) is bounded in \(\dot H^s(\mathbb R^n)\), after
passing to a subsequence we may assume that
\[
b_k:=
\left\langle A_\xi(\rho_k)\right\rangle_{\mathbb S^{n-1}}==
m_{n,s}
\int_{\mathbb S^{n-1}}
\int_0^\infty
r^{n-1+2s}
|\widehat{\rho_k}(r\theta)|^2\,dr\,d\sigma(\theta)
\to b>0 .
\]
For each \(k\), define a probability measure \(\mu_k\) on \(\mathbb S^{n-1}\) by
\[
d\mu_k(\theta)
:=
\frac{m_{n,s}}{b_k}
\left(
\int_0^\infty
r^{n-1+2s}
|\widehat{\rho_k}(r\theta)|^2\,dr
\right)d\sigma(\theta).
\]
After passing to a  subsequence,
\(\mu_k\rightharpoonup\mu\) weakly as probability measures on \(\mathbb S^{n-1}\).
Then
\[
A_\xi(\rho_k)
=
        \frac{\left\langle A_\xi(\rho_k)\right\rangle_{\mathbb S^{n-1}}}{m_{n,s}}
\int_{\mathbb S^{n-1}}
|\xi\cdot\theta|^{2s}\,d\mu_k(\theta),
\]
and we have the uniform convergence
\[
\frac{ A_\xi(\rho_k)}{\left\langle A_\xi(\rho_k)\right\rangle_{\mathbb S^{n-1}}}
=
        \frac{1}{m_{n,s}}
\int_{\mathbb S^{n-1}}
|\xi\cdot\theta|^{2s}\,d\mu_k(\theta)
\to
B(\xi)
:=
\frac1{m_{n,s}}
\int_{\mathbb S^{n-1}}
|\xi\cdot\theta|^{2s}\,d\mu(\theta).
\]
Moreover,
\( B\ge0\), \(\left\langle B\right\rangle_{\mathbb S^{n-1}}=1\).
Thus \(B\not\equiv0\), and \(A_0+bB>A_0\) on a set of positive measure. Hence
\[
\Phi(A_0+ A_\xi(\rho_k))\to  \Phi(A_0+bB)>A_0 .
\]
contradicting \eqref{eq:compact-Phi-A0Bk}. Therefore with \eqref{eq:intro-average-A},
\[
\left\langle  A_\xi(\rho_k)\right\rangle_{\mathbb S^{n-1}}
=
m_{n,s}[\rho_k]_{\dot H^s(\mathbb R^n)}^2 \to0.
\]
Undoing all normalizations, we obtain parameters
\(c_k\neq0\), \(A_k\in GL^+(n)\), and \(x_k\in\mathbb R^n\) such that
\[
c_k^{-1}T_{A_k,x_k}v_k
\to U
\qquad
\text{strongly in }\dot H^s(\mathbb R^n).
\]
This proves \eqref{eq:normalized-compactness-conclusion}.
\end{proof}

\section{Proof of the global stability theorem and of the strict upper bound}
\label{sec:proof}

In this section we complete the proof of the main global results. The lower
bound in Theorem~\ref{thm:two-sided-affine-stability} follows from the local
stability estimate and the global compactness argument, while the upper bound is
reduced to the classical fractional Sobolev stability theorem of
Chen--Frank--Weth~\cite{CFW13}. The proof of
Theorem~\ref{thm:affine-strict-upper-bound} follows the degree-two perturbation
argument of K\"onig~\cite{Konig23}.

\begin{lemma}
\label{lem:nearest-point-local-cone}
Assume \(n\ge2\) and \(0<s<1\). There exist
\(\varepsilon_0>0\), \(R_0>0\), and \(C>0\) such that the following holds.

If
\( [w-U]_{\dot H^s(\mathbb R^n)}\le \varepsilon_0\),
then there exists \(V\in\mathcal M_{\rm cone}\), with
\([V-U]_{\dot H^s(\mathbb R^n)}\le R_0\),
such that
\begin{equation}
\label{eq:local-nearest-point}
[w-V]_{\dot H^s(\mathbb R^n)}
=
\inf_{\substack{W\in\mathcal M_{\rm cone}\\
[W-U]_{\dot H^s(\mathbb R^n)}\le R_0}}
[w-W]_{\dot H^s(\mathbb R^n)} .
\end{equation}
Moreover, if \(r=w-V\), then
\begin{equation}
\label{eq:nearest-point-orthogonality}
r\perp_{\dot H^s(\mathbb R^n)}T_V\mathcal M_{\rm cone}.
\end{equation}
Finally,
\begin{equation}
\label{eq:local-distance-comparison}
D_{\operatorname{aff}}(w,\mathcal M_{\rm cone})
\le
C[w-V]_{\dot H^s(\mathbb R^n)} .
\end{equation}
\end{lemma}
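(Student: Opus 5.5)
We parametrize the cone near \(U\) by the finite-dimensional smooth map
\[
\Psi:(c,A,x_0)\mapsto c\,T_{A,x_0}^{-1}U,\qquad c>0,\ A\in GL^+(n),\ x_0\in\mathbb R^n,
\]
and work on the compact parameter set
\[
\mathcal P_{R_0}:=\{(c,A,x_0):[\Psi(c,A,x_0)-U]_{\dot H^s}\le R_0\}.
\]
We fix \(R_0\) small, say \(R_0<\frac12[U]_{\dot H^s}\), so that \(c\) stays bounded away from \(0\) and \(\infty\). Since \(U\) is not invariant under any nontrivial affine motion except rotations, the set \(\{W\in\mathcal M_{\rm cone}:[W-U]_{\dot H^s}\le R_0\}\) is then a compact subset of \(\dot H^s\). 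Compactness should follow from the decay of \(U\) and continuity of \((A,x_0)\mapsto T_{A,x_0}^{-1}U\) in \(\dot H^s\): escape of \(A\) to the boundary of \(GL^+(n)\), or of \(x_0\) to infinity, forces \(T_{A,x_0}^{-1}U\rightharpoonup0\), which is incompatible with closeness to \(U\). The function \(W\mapsto[w-W]_{\dot H^s}\) is continuous, so a minimizer \(V\) exists, which gives \eqref{eq:local-nearest-point}.

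For \eqref{eq:nearest-point-orthogonality}, we first show that \(V\) lies in the interior of the constraint set. By the triangle inequality,
\[
[V-U]\le[V-w]+[w-U]\le 2[w-U]\le2\varepsilon_0,
\]
since \(U\) itself is an admissible competitor. Choosing \(\varepsilon_0<R_0/4\) gives \([V-U]<R_0\), so \(V\) is an interior minimizer. Then for every smooth curve \(t\mapsto\Psi(c(t),A(t),x_0(t))\) through \(V\), the first variation of \([w-\Psi]^2\) vanishes. This means \(r=w-V\) is \(\dot H^s\)-orthogonal to every derivative of \(\Psi\) at the parameters of \(V\), and these span \(T_V\mathcal M_{\rm cone}\). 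We need \(\Psi\) to be \(C^1\) into \(\dot H^s\), which holds because the derivatives are \(U\), \(\partial_iU\), \(x\cdot B\nabla U\) and their transforms, and these lie in \(\dot H^s\) by the decay of \(U\).

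For \eqref{eq:local-distance-comparison}, write \(V=c\,T_{A,x_0}^{-1}U\), so that \(T_{A,x_0}V=cU\). Then
\[
D_{\rm aff}(w)^2\le[T_{A,x_0}w-cU]^2=[T_{A,x_0}(w-V)]^2 .
\]
It therefore suffices to show that \(\|T_{A,x_0}\|_{\dot H^s\to\dot H^s}\le C\) uniformly over the compact parameter set. The translation part is an isometry. For the linear part, on the Fourier side \(T_{A,0}\) gives
\[
[T_{A,0}f]^2=\int|A^{T}\zeta|^{2s}|\widehat f(\zeta)|^2\,d\zeta,
\]
up to a \(|\det A|\) power, and this is at most \(\|A\|^{2s}|\det A|^{\ldots}[f]^2\). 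Since \(A\) and \(A^{-1}\) are bounded on \(\mathcal P_{R_0}\), the constant \(C\) depends only on \(n,s,R_0\).

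The main obstacle is the compactness and boundedness of the parameter set \(\mathcal P_{R_0}\), i.e.\ showing that \([c\,T_{A,x_0}^{-1}U-U]\le R_0\) forces \(A\) into a compact subset of \(GL^+(n)\) and \(x_0\) and \(c\) into bounded sets. I would argue by contradiction. Suppose parameters degenerate along a sequence. Using the Fourier form \(|A^{-T}\zeta|\) and \(\widehat U(\zeta)\sim e^{-|\zeta|}|\zeta|^{-s}\), the inner product \(\langle T_{A,x_0}^{-1}U,U\rangle_{\dot H^s}\) then tends to \(0\) while the norms stay comparable, which contradicts proximity to \(U\).
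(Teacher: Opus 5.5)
Your argument is correct, and its skeleton (constrained minimizer, first variation, uniform bound on $T_{A,x_0}$) matches the paper's. The difference is in how you obtain the minimizer.

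The paper works only with the local chart $\Xi(\alpha,y,\lambda,B)=(1+\alpha)T^{-1}_{e^\lambda e^B,y}U$, with $B$ symmetric and trace-free. It uses injectivity of $D\Xi(0)$ to get a $C^2$ embedding near $U$, and then asserts that $\mathcal M_{\rm cone}\cap B_{\dot H^s}(U,R_0)$ is exactly this compact chart. The operator bound then comes from $A$ being close to $I$. That assertion silently uses the properness fact you isolate: no $c\,T^{-1}_{A,x_0}U$ with $(A,x_0)$ far from $SO(n)\times\{0\}$ comes within $R_0$ of $U$.

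Your route proves this directly, via compactness of the parameter set modulo rotations. As a result:
\begin{itemize}
\item existence of the minimizer needs neither injectivity of the differential nor a manifold structure;
\item orthogonality needs only $C^1$ regularity of $\Psi$;
\item interiority is made explicit through $[V-U]_{\dot H^s}\le 2[w-U]_{\dot H^s}$, which the paper only asserts.
\end{itemize}
What the paper's chart buys in return is the local $C^2$-manifold structure it uses later (convergence of tangent spaces, the squared-distance expansion). Your version does not provide that by itself.

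Two steps of your sketch should be tightened.

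\emph{The bound on $c$.} It cannot come from $[c\,T^{-1}_{A,x_0}U]_{\dot H^s}$, because $T_{A,x_0}$ is not a $\dot H^s$-isometry when $A$ is not conformal. Use the invariance of the $L^{2_s^*}$-norm and Sobolev instead:
$\bigl||c|-1\bigr|\,[U]_{\dot H^s}\le S_{n,s}^{1/2}\|c\,T^{-1}_{A,x_0}U-U\|_{L^{2_s^*}}\le [c\,T^{-1}_{A,x_0}U-U]_{\dot H^s}\le R_0$.
The case $c\le0$ (which $\mathcal M_{\rm cone}$ allows) is excluded because then $|c\,T^{-1}_{A,x_0}U-U|\ge U$ pointwise.

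\emph{The degeneration argument.} You do not need the norms to stay comparable. The inequality $[c\,T^{-1}_{A,x_0}U-U]_{\dot H^s}^2\ge[U]_{\dot H^s}^2-2c\langle T^{-1}_{A,x_0}U,U\rangle_{\dot H^s}$ suffices. Since $(-\Delta)^sU$ is a positive multiple of $U^{2_s^*-1}$, the pairing is a positive multiple of $\int T^{-1}_{A,x_0}U\,U^{2_s^*-1}\,dx$. Write $A=QDR$, change variables, and bound both factors by products of one-dimensional profiles (AM--GM). This reduces the pairing to one-dimensional dislocation pairings in $L^{1/a}\times L^{1/(1-a)}$, with $a=\frac{n-2s}{2n}$. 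These are uniformly bounded and tend to $0$ when some $d_i\to0$ or $d_i\to\infty$, or when $|x_0|\to\infty$. This is more robust than the Fourier-side heuristic. Note also that $\widehat U$ behaves like $|\zeta|^{-2s}$ near the origin, not $|\zeta|^{-s}$.
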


\begin{proof}
Let
\[
\operatorname{Sym}_0(n)
:=
\{B\in\mathbb R^{n\times n}:B=B^T,\ \operatorname{tr}B=0\}.
\]
For
\(a=(\alpha,y,\lambda,B)
\in
\mathbb R\times\mathbb R^n\times\mathbb R\times\operatorname{Sym}_0(n)\)
near \(0\), set
\(
\Xi(a)
:=
(1+\alpha)\,T_{e^\lambda e^B,y}^{-1}U
\).
Since \(U\) is radial, rotational parameters are redundant. The map \(\Xi\) is
\(C^2\) as a map into \(\dot H^s(\mathbb R^n)\), \(\Xi(0)=U\), and
\[
\operatorname{Im}D\Xi(0)
=
T_U\mathcal M_{\rm cone}
=
\operatorname{span}
\left\{
U,\ Z_0,\ \partial_{x_1}U,\dots,\partial_{x_n}U,\
x\cdot B\nabla U
\right\}.
\]
After restricting to the above non-redundant parameters, \(D\Xi(0)\) is
injective. Hence, after shrinking the parameter neighborhood, \(\Xi\) is a
\(C^2\)-embedding and parametrizes the local piece of
\(\mathcal M_{\rm cone}\) near \(U\).

Thus, for \(R_0>0\) small, the set
\(\mathcal M_{\rm cone}\cap B_{\dot H^s}(U,R_0)\)
is a compact finite-dimensional \(C^2\)-manifold chart. Therefore, if
\([w-U]_{\dot H^s}\le\varepsilon_0\) with \(\varepsilon_0\) sufficiently small,
the function
\(a\mapsto [w-\Xi(a)]_{\dot H^s(\mathbb R^n)}^2\)
has an interior minimizer \(a_w\). Set \(V=\Xi(a_w)\). Then
\eqref{eq:local-nearest-point} follows.

Since \(a_w\) is an interior minimizer, the first variation vanishes:
\[
\left\langle
w-V,\,
D\Xi(a_w)\tau
\right\rangle_{\dot H^s}
=
0
\qquad
\text{for every parameter direction }\tau .
\]
Because \(\operatorname{Im}D\Xi(a_w)=T_V\mathcal M_{\rm cone}\), this gives
\eqref{eq:nearest-point-orthogonality}.
Finally, since \(V\in\mathcal M_{\rm cone}\), write
\( V=c\,T_{A,x_0}^{-1}U\)
with \(A\) close to \(I\). Then \(T_{A,x_0}V=cU\), and hence
\[
\begin{aligned}
D_{\operatorname{aff}}(w,\mathcal M_{\rm cone})
&\le
[T_{A,x_0}w-cU]_{\dot H^s(\mathbb R^n)}
=
[T_{A,x_0}(w-V)]_{\dot H^s(\mathbb R^n)}
\le
C[w-V]_{\dot H^s(\mathbb R^n)} ,
\end{aligned}
\]
because \(A\) stays in a fixed neighborhood of the identity. This proves
\eqref{eq:local-distance-comparison}.
\end{proof}

\subsection{Proof of Theorem~\ref{thm:two-sided-affine-stability}}

\begin{proof}[Proof of Theorem~\ref{thm:two-sided-affine-stability}]

We first prove the local estimate. There exist \(\varepsilon_0>0\) and
\(c_0>0\), depending only on \(n\) and \(s\), such that if, for some
\(c_*\ne0\), \(A_*\in GL^+(n)\), and \(x_*\in\mathbb R^n\),
\[
w:=c_*^{-1}T_{A_*,x_*}u
\quad\text{satisfies}\quad
[w-U]_{\dot H^s(\mathbb R^n)}\le \varepsilon_0,
\]
then
\begin{equation}
\label{eq:local-affine-frac-stability}
\delta_{\operatorname{aff},s}(u)
\ge
c_0D_{\operatorname{aff}}(u,\mathcal M_{\rm cone})^2 .
\end{equation}

Indeed, by affine invariance and homogeneity,
\[
\delta_{\operatorname{aff},s}(u)
=
c_*^2\delta_{\operatorname{aff},s}(w),
\qquad
D_{\operatorname{aff}}(u,\mathcal M_{\rm cone})
=
|c_*|D_{\operatorname{aff}}(w,\mathcal M_{\rm cone}).
\]
Thus it suffices to prove the estimate for \(w\).

Applying Lemma~\ref{lem:nearest-point-local-cone} to \(w\), we obtain
\(V\in\mathcal M_{\rm cone}\), close to \(U\), such that,
\begin{equation}
\label{eq:local-proof-orthogonality}
r\perp_{\dot H^s(\mathbb R^n)}T_V\mathcal M_{\rm cone},
\qquad  r=w-V
\end{equation}
and
\begin{equation}
\label{eq:local-proof-distance-comparison}
D_{\operatorname{aff}}(w,\mathcal M_{\rm cone})
\le
C[r]_{\dot H^s(\mathbb R^n)}.
\end{equation}

We first note that the spectral gap at \(U\) persists uniformly for extremals
\(V\in\mathcal M_{\rm cone}\) sufficiently close to \(U\). Indeed, otherwise
there would exist \(V_j\in\mathcal M_{\rm cone}\), \(V_j\to U\), and
\(h_j\in\dot H^s(\mathbb R^n)\) such that
\[
h_j\perp_{\dot H^s(\mathbb R^n)}T_{V_j}\mathcal M_{\rm cone},
\qquad
[h_j]_{\dot H^s(\mathbb R^n)}=1,
\qquad
\mathcal Q_{V_j}(h_j)\to0 .
\]
Since \(\mathcal M_{\rm cone}\) is a finite-dimensional \(C^2\)-manifold near
\(U\), its tangent spaces converge:
\( T_{V_j}\mathcal M_{\rm cone}\to T_U\mathcal M_{\rm cone}\).
Thus the \(\dot H^s\)-projection of \(h_j\) onto \(T_U\mathcal M_{\rm cone}\)
tends to zero. If \(P_U\) denotes this projection and
\(g_j=h_j-P_Uh_j\),
then
\[
g_j\perp_{\dot H^s(\mathbb R^n)}T_U\mathcal M_{\rm cone},
\qquad
[g_j]_{\dot H^s(\mathbb R^n)}\to1 .
\]
Since \(P_Uh_j\in\ker\mathcal Q_U\), Proposition~\ref{pro:sharp-local-affine-coeff}
gives
\[
\mathcal Q_U(h_j)
=
\mathcal Q_U(g_j)
\ge
\gamma_s[g_j]_{\dot H^s(\mathbb R^n)}^2
\ge
\frac{\gamma_s}{2}
\]
for \(j\) large. On the other hand, the second variation depends continuously
on the base point along the local cone, hence
\[
\mathcal Q_{V_j}(h_j)
=
\mathcal Q_U(h_j)+o(1),
\]
contradicting \(\mathcal Q_{V_j}(h_j)\to0\). Therefore there exists
\(\gamma_0>0\) such that, for every \(V\in\mathcal M_{\rm cone}\) sufficiently
close to \(U\),
\begin{equation}
\label{eq:uniform-gap-near-U}
\mathcal Q_V(h)
\ge
\gamma_0[h]_{\dot H^s(\mathbb R^n)}^2
\qquad
\forall\,h\perp_{\dot H^s(\mathbb R^n)}T_V\mathcal M_{\rm cone}.
\end{equation}

Now \(V\) is an affine extremal, by \eqref{eq:local-proof-orthogonality},
Taylor expansion at \(V\),
together with \eqref{eq:uniform-gap-near-U}, yields
\[
\delta_{\operatorname{aff},s}(V+r)
=
\mathcal Q_V(r)+o([r]_{\dot H^s(\mathbb R^n)}^2)
\ge
\frac{\gamma_0}{2}[r]_{\dot H^s(\mathbb R^n)}^2,
\]
after reducing the local smallness threshold if necessary.

Combining this with
\eqref{eq:local-proof-distance-comparison}, we obtain
\[
\delta_{\operatorname{aff},s}(w)
\ge
cD_{\operatorname{aff}}(w,\mathcal M_{\rm cone})^2.
\]
Rescaling back to \(u\) proves \eqref{eq:local-affine-frac-stability}.

We now prove the global lower bound for \(n\ge2\). Suppose, by contradiction,
that no constant \(c_{n,s}>0\) satisfies
\[
\delta_{\operatorname{aff},s}(u)
\ge
c_{n,s}D_{\operatorname{aff}}(u,\mathcal M_{\rm cone})^2
\qquad
\forall u\in\dot H^s(\mathbb R^n).
\]
Then there exists a sequence \(u_k\ne0\) such that
\begin{equation}
\label{eq:false-global-frac}
\delta_{\operatorname{aff},s}(u_k)
<
\frac1k
D_{\operatorname{aff}}(u_k,\mathcal M_{\rm cone})^2 .
\end{equation}
Since both sides are homogeneous of degree \(2\), we may normalize
\( \|u_k\|_{L^{2_s^*}(\mathbb R^n)}=1\).

Using Lemma~\ref{lem:affine-normalization}, after applying a volume-preserving
affine transformation and relabeling, we may additionally assume that
\[
[u_k]_{\dot H^s(\mathbb R^n)}\le C
\left(\mathcal E_{\operatorname{aff},s}(u_k)+1\right).
\]
In particular, the failure condition \eqref{eq:false-global-frac} forces
\( \delta_{\operatorname{aff},s}(u_k)\to0 \).
Therefore Lemma~\ref{lem:normalized-compactness} applies. After passing to a
subsequence, there exist
\( c_k\ne0\),  \(A_k\in GL^+(n)\),  \(x_k\in\mathbb R^n\)
such that
\(c_k^{-1}T_{A_k,x_k}u_k \to U\) trongly in \(\dot H^s(\mathbb R^n)\).
Thus
\(D_{\operatorname{aff}}(u_k,\mathcal M_{\rm cone})\to0\).
For \(k\) sufficiently large, the local estimate
\eqref{eq:local-affine-frac-stability} gives
\[
\delta_{\operatorname{aff},s}(u_k)
\ge
c_0D_{\operatorname{aff}}(u_k,\mathcal M_{\rm cone})^2,
\]
contradicting \eqref{eq:false-global-frac}. Hence the global lower bound holds:
\[
\delta_{\operatorname{aff},s}(u)
\ge
c_{n,s}D_{\operatorname{aff}}(u,\mathcal M_{\rm cone})^2 .
\]

It remains to prove the upper bound. By the definition of
\(\mathcal E_{\operatorname{aff},s}\), the negative-mean inequality, and
\eqref{eq:intro-affine-energy},
\[
\begin{aligned}
\mathcal E_{\operatorname{aff},s}(v)^2
&=
m_{n,s}^{-1}
\left\langle
A_\xi(v)^{-\frac n{2s}}
\right\rangle_{\mathbb S^{n-1}}^{-\frac{2s}{n}}
\le
m_{n,s}^{-1}
\left\langle
A_\xi(v)
\right\rangle_{\mathbb S^{n-1}}
=
[v]_{\dot H^s(\mathbb R^n)}^2 .
\end{aligned}
\]
Therefore
\begin{equation}
\label{eq:affine-deficit-below-classical}
\delta_{\operatorname{aff},s}(v)
\le
[v]_{\dot H^s(\mathbb R^n)}^2
-
S_{n,s}\|v\|_{L^{2_s^*}(\mathbb R^n)}^2
=
\delta_{\operatorname{frac},s}(v).
\end{equation}
By the sharp fractional Sobolev stability estimate of
Chen--Frank--Weth~\cite[Theorem~1]{CFW13} or \eqref{eq:intro-CFW-stability},
\begin{equation}
\label{eq:CFW-upper-used}
\delta_{\operatorname{frac},s}(v)
\le
d_{\operatorname{frac}}(v,\mathcal M)^2 .
\end{equation}
Now fix arbitrary \(A\in GL^+(n)\), \(x_0\in\mathbb R^n\), and \(c\in\mathbb R\),
and set \(v=T_{A,x_0}u\).
Since \(cU\in\mathcal M\), \eqref{eq:affine-deficit-below-classical} and
\eqref{eq:CFW-upper-used} give
\[
\delta_{\operatorname{aff},s}(u)
=
\delta_{\operatorname{aff},s}(v)
\le
[v-cU]_{\dot H^s(\mathbb R^n)}^2
=
[T_{A,x_0}u-cU]_{\dot H^s(\mathbb R^n)}^2 .
\]
Taking the infimum over \(c\in\mathbb R\), \(A\in GL^+(n)\), and
\(x_0\in\mathbb R^n\), we obtain
\[
\delta_{\operatorname{aff},s}(u)
\le
D_{\operatorname{aff}}(u,\mathcal M_{\rm cone})^2 .
\]

If \(n=1\), then \(\mathbb S^0=\{-1,1\}\) and
\( A_1(u)=A_{-1}(u)=[u]_{\dot H^s(\mathbb R)}^2\).
Thus
\( \mathcal E_{\operatorname{aff},s}(u)^2=[u]_{\dot H^s(\mathbb R)}^2\),
and \(\mathcal M_{\rm cone}\) reduces to \eqref{eq:intro-classical-manifold}.
Therefore  Theorem~\ref{thm:two-sided-affine-stability} in dimension \(n=1\), \(0<s<1/2\), is exactly the
Chen--Frank--Weth stability \cite{CFW13}.

Finally, we prove that the exponent \(2\) is sharp. Choose
\(0\ne\psi\in\dot H^s(\mathbb R^n)\), \(\psi\perp_{\dot H^s(\mathbb R^n)}T_U\mathcal M_{\rm cone}\).
By the second variation at \(U\),
\begin{equation}
\label{eq:sharpness-deficit-expansion}
\delta_{\operatorname{aff},s}(U+t\psi)
=
t^2\mathcal Q_U(\psi)+o(t^2).
\end{equation}
On the other hand, since \(\mathcal M_{\rm cone}\) is a \(C^2\)
finite-dimensional manifold near \(U\), its local chart gives
\[
D_{\operatorname{aff}}(U+t\psi,\mathcal M_{\rm cone})
=
|t|\operatorname{dist}_{\dot H^s}
\bigl(\psi,T_U\mathcal M_{\rm cone}\bigr)
+
o(|t|).
\]
Using the orthogonality of \(\psi\), this becomes
\begin{equation}
\label{eq:sharpness-distance-expansion}
D_{\operatorname{aff}}(U+t\psi,\mathcal M_{\rm cone})
=
|t|[\psi]_{\dot H^s(\mathbb R^n)}
+
o(|t|).
\end{equation}
Combining \eqref{eq:sharpness-deficit-expansion} and
\eqref{eq:sharpness-distance-expansion}, we obtain
\[
\delta_{\operatorname{aff},s}(U+t\psi)
\le
C_\psi
D_{\operatorname{aff}}(U+t\psi,\mathcal M_{\rm cone})^2
\]
for \(|t|\) small. Hence no estimate with power strictly smaller than \(2\) can
hold uniformly. This proves the sharpness of the exponent and completes the
proof.
\end{proof}

\subsection{Proof of Theorem~\ref{thm:affine-strict-upper-bound}}

We finally show that the optimal global quotient is strictly smaller than the
sharp local spectral value. The argument follows the same general strategy as in
K\"onig's study of the classical Bianchi--Egnell quotient~\cite{Konig23}.

\begin{proof}[Proof of Theorem~\ref{thm:affine-strict-upper-bound}]
Let
\[
\rho=Pv_2,
\qquad
v_2(\Theta):=\Theta_{n+1}^2-\frac1{n+1},
\qquad \Theta\in\mathbb S^n ,
\]
where \(P\) is the stereographic conformal transfer. Then \(v_2\) is a
spherical harmonic of degree \(2\) on \(\mathbb S^n\), and \(\rho\) is radial.
Therefore by Proposition~\ref{pro:sharp-local-affine-coeff},
\[
\mathcal Q_U(\rho)= \mathcal Q_{\operatorname{frac},s}(\rho)
=
\gamma_s[\rho]_{\dot H^s(\mathbb R^n)}^2,
\qquad
\gamma_s=\frac{2s}{\frac n2+s+1}.
\]

For \(\varepsilon>0\), set
\( u_\varepsilon:=U+\varepsilon\rho \).
Since \(u_\varepsilon\) is radial,
by the cubic expansion along this degree-two direction
\cite[Page~4]{Konig23}, there exists \(b_{n,s}>0\) such that
\begin{equation}
\label{eq:strict-upper-numerator}
\delta_{\operatorname{aff},s}(u_\varepsilon)
=
        \delta_{\operatorname{frac},s}(u_\varepsilon)
        =
\gamma_s\varepsilon^2[\rho]_{\dot H^s(\mathbb R^n)}^2
-
b_{n,s}\varepsilon^3
+
o(\varepsilon^3).
\end{equation}

It remains only to compare the affine distance with \(\varepsilon\). Since
\(\mathcal M_{\rm cone}\) is a \(C^2\) finite-dimensional manifold near \(U\)
and
\( \rho\perp_{\dot H^s(\mathbb R^n)}T_U\mathcal M_{\rm cone}\),
the standard squared-distance expansion to a \(C^2\) submanifold gives
\begin{equation}
\label{eq:strict-upper-distance}
D_{\operatorname{aff}}(U+\varepsilon\rho,\mathcal M_{\rm cone})^2
=
\varepsilon^2[\rho]_{\dot H^s(\mathbb R^n)}^2
+
O(\varepsilon^4).
\end{equation}

Combining \eqref{eq:strict-upper-numerator} and
\eqref{eq:strict-upper-distance}, we obtain
\[
C_{\operatorname{aff},s} <\frac{\delta_{\operatorname{aff},s}(u_\varepsilon)}
{D_{\operatorname{aff}}(u_\varepsilon,\mathcal M_{\rm cone})^2}
=
\gamma_s
-
\frac{b_{n,s}}{[\rho]_{\dot H^s(\mathbb R^n)}^2}\varepsilon
+
o(\varepsilon)
<
\gamma_s,
\]
for all sufficiently small \(\varepsilon>0\).
This proves the theorem.
\end{proof}

\section{Spectral decomposition of the affine fractional Hessian}
\label{sec:sector-decomposition-affine-frac}

The purpose of this section is to identify the nullspace of the affine
fractional Hessian \(Q_U\) at the radial bubble
\[
U(x)=(1+|x|^2)^{-\frac{n-2s}{2}},
\qquad 0<s<1,\qquad n\ge2.
\]
We shall prove that
\[
T_U\mathcal M_{\rm cone}
=
\ker Q_U
=
\operatorname{span}
\left\{
U,\ Z_0,\ \partial_{x_1}U,\dots,\partial_{x_n}U,\ x\cdot B\nabla U
\right\},
\]
where
\(B=B^T\), \(\operatorname{tr}B=0\), \(Z_0=\frac{n-2s}{2}U+x\cdot\nabla U\).

\subsection{Physical angular sectors for the classical fractional Hessian}

For
\(\phi\in \dot H^s(\mathbb R^n)\), write, for a.e. \(r>0\),
\[
\phi(r,\theta)
=
\sum_{\ell=0}^{\infty}
\sum_{m=1}^{d_\ell}
f_{\ell,m}(r)Y_{\ell,m}(\theta),
\qquad x=r\theta ,
\]
where
\[
-\Delta_{\mathbb S^{n-1}}Y_{\ell,m}
=
\lambda_\ell Y_{\ell,m},
\qquad
\lambda_\ell=\ell(\ell+n-2),
\]
and
\begin{equation}
\label{eq:normalization}
        \left\langle
Y_{\ell,m}Y_{\ell,m'}
\right\rangle_{\mathbb S^{n-1}}
=
\delta_{mm'}.
\end{equation}
We choose \(Y_{0,1}\equiv1\), and set
\[
\phi_\ell(r,\theta)
:=
\sum_{m=1}^{d_\ell}
f_{\ell,m}(r)Y_{\ell,m}(\theta).
\]

Since the \(\dot H^s\)-inner product is \(O(n)\)-invariant, different physical
angular sectors are orthogonal. Thus
\begin{equation}
\label{eq:frac-Hs-sector-splitting}
[\phi]_{\dot H^s(\mathbb R^n)}^2
=
\sum_{\ell=0}^{\infty}
[\phi_\ell]_{\dot H^s(\mathbb R^n)}^2
=
\sum_{\ell=0}^{\infty}
\sum_{m=1}^{d_\ell}
[f_{\ell,m}Y_{\ell,m}]_{\dot H^s(\mathbb R^n)}^2 .
\end{equation}

The local weighted terms are diagonal in the same decomposition. Since \(U\) is
radial, we have
\begin{equation}
\label{eq:frac-weighted-L2-splitting}
\int_{\mathbb R^n}U^{2_s^*-2}\phi^2\,dx
=
|\mathbb S^{n-1}|
\sum_{\ell=0}^{\infty}
\sum_{m=1}^{d_\ell}
\int_0^\infty
U(r)^{2_s^*-2}f_{\ell,m}(r)^2r^{n-1}\,dr .
\end{equation}
Moreover, since every nonconstant spherical harmonic has zero mean,
\begin{equation}
\label{eq:frac-rank-one-splitting}
\int_{\mathbb R^n}U^{2_s^*-1}\phi\,dx
=
|\mathbb S^{n-1}|
\int_0^\infty
U(r)^{2_s^*-1}f_{0,1}(r)r^{n-1}\,dr .
\end{equation}

Combining
\eqref{eq:def-hessian-frac},
\eqref{eq:frac-Hs-sector-splitting},
\eqref{eq:frac-weighted-L2-splitting}, and
\eqref{eq:frac-rank-one-splitting}, we obtain
\begin{equation}
\label{eq:frac-Q-sector-decomposition}
Q_{\operatorname{frac},s}(\phi)
=
Q_{\operatorname{frac},s}^{(0)}(f_{0,1})
+
\sum_{\ell=1}^{\infty}
\sum_{m=1}^{d_\ell}
Q_{\operatorname{frac},s}^{(\ell)}(f_{\ell,m}).
\end{equation}
Here, for \(\ell\ge1\),
\[
\begin{aligned}
Q_{\operatorname{frac},s}^{(\ell)}(f)
&:=
[fY_{\ell,m}]_{\dot H^s(\mathbb R^n)}^2
\\
&\quad
-
|\mathbb S^{n-1}|
S_{n,s}(2_s^*-1)
\|U\|_{L^{2_s^*}(\mathbb R^n)}^{2-2_s^*}
\int_0^\infty
U(r)^{2_s^*-2}f(r)^2r^{n-1}\,dr ,
\end{aligned}
\]
while in the radial sector
\[
\begin{aligned}
Q_{\operatorname{frac},s}^{(0)}(f)
&:=
[f]_{\dot H^s(\mathbb R^n)}^2
-
|\mathbb S^{n-1}|
S_{n,s}(2_s^*-1)
\|U\|_{L^{2_s^*}(\mathbb R^n)}^{2-2_s^*}
\int_0^\infty
U(r)^{2_s^*-2}f(r)^2r^{n-1}\,dr
\\
&\quad
+
S_{n,s}(2_s^*-2)
\|U\|_{L^{2_s^*}(\mathbb R^n)}^{2-2\cdot 2_s^*}
\left(
|\mathbb S^{n-1}|
\int_0^\infty
U(r)^{2_s^*-1}f(r)r^{n-1}\,dr
\right)^2 .
\end{aligned}
\]
For the whole \(\ell\)-block we write
\(Q_{\operatorname{frac},s}[\phi_\ell] = \sum_{m=1}^{d_\ell} Q_{\operatorname{frac},s}^{(\ell)}(f_{\ell,m})\).

\subsection{Classical nondegeneracy}

We shall use the nondegeneracy result of
Chen--Frank--Weth~\cite{CFW13}, which states that
\[
\ker Q_{\operatorname{frac},s}
=
\operatorname{span}
\left\{
U,\ Z_0,\ \partial_{x_1}U,\dots,\partial_{x_n}U
\right\}.
\]

Since \(U\) is radial, the spherical harmonic decomposition
\eqref{eq:frac-Q-sector-decomposition} is invariant under
\(Q_{\operatorname{frac},s}\). Moreover,
\(U\) and \(Z_0\) are radial, whereas
\(\partial_{x_j}U(x)=U'(r)\theta_j\), \(j=1,\dots,n\),
belong to the first spherical harmonic sector \(\mathcal H_1\).
Therefore
\begin{equation}
\label{eq:frac-radial-kernel}
Q_{\operatorname{frac},s}[\phi_0]=0
\quad\Longleftrightarrow\quad
\phi_0\in\operatorname{span}\{U,Z_0\},
\end{equation}
and
\begin{equation}
\label{eq:frac-first-kernel}
Q_{\operatorname{frac},s}[\phi_1]=0
\quad\Longleftrightarrow\quad
\phi_1\in
\operatorname{span}
\{\partial_{x_1}U,\dots,\partial_{x_n}U\}.
\end{equation}

For every \(\ell\ge2\), the kernel is trivial:
\begin{equation}
\label{eq:frac-higher-kernel}
Q_{\operatorname{frac},s}[\phi_\ell]=0
\quad\Longrightarrow\quad
\phi_\ell=0.
\end{equation}
Moreover, we have the following lower bounded estimate.


\begin{lemma}
\label{lem:sharp-classical-frac-sector-bound}
Let \(n\ge2\), \(0<s<1\), and set
\[
        \Lambda_k
=
\frac{\Gamma(k+\frac n2+s)}
        {\Gamma(k+\frac n2-s)},
\qquad k=0,1,2,\dots .
\]
Let \(Y_{\ell,m}\in\mathcal H_\ell(\mathbb S^{n-1})\), and let
\(\phi(x)=f(r)Y_{\ell,m}(\theta)\), \(x=r\theta\).
Then, for every \(\ell\ge2\),
\begin{equation}
\label{eq:appendix-frac-sector-lower}
\mathcal Q_{\operatorname{frac},s}^{(\ell)}(f)
=
Q_{\operatorname{frac},s}(\phi)
\ge
\left(1-\frac{\Lambda_1}{\Lambda_\ell}\right)
[fY_{\ell,m}]_{\dot H^s(\mathbb R^n)}^2 .
\end{equation}
Here \(\mathcal H_\ell(\mathbb S^n)\) denotes the space of degree-\(\ell\) spherical harmonics on \(\mathbb S^n\).
\end{lemma}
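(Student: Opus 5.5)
The plan is to move the quadratic form to the sphere $\mathbb S^n$ through the stereographic conformal transfer $P$ from \eqref{eq:def-conformal-transfor}. There the fractional seminorm is diagonal in spherical harmonics, and the estimate reduces to the monotonicity of the eigenvalues $\Lambda_k$.

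\emph{Step 1: transfer the two quadratic terms.} Write $\phi=Pv$ with $v$ on $\mathbb S^n$. The standard intertwining relation for the conformally covariant operator $A_{2s}$ on $\mathbb S^n$ (Lieb, Beckner) gives, up to a common positive constant $\kappa_{n,s}$,
\[
[\phi]_{\dot H^s(\mathbb R^n)}^2=\kappa_{n,s}\sum_{k\ge0}\Lambda_k\|v_k\|_{L^2(\mathbb S^n)}^2 ,
\]
where $v_k$ is the projection of $v$ onto $\mathcal H_k(\mathbb S^n)$. The weight $U^{2_s^*-2}$ is exactly the conformal factor, so $\int_{\mathbb R^n}U^{2_s^*-2}\phi^2\,dx$ is a constant multiple of $\|v\|_{L^2(\mathbb S^n)}^2$.

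\emph{Step 2: fix the constant in front of the weighted term.} I will show that
\[
S_{n,s}(2_s^*-1)\|U\|_{L^{2_s^*}}^{2-2_s^*}\int_{\mathbb R^n}U^{2_s^*-2}\phi^2\,dx=\kappa_{n,s}\Lambda_1\|v\|_{L^2(\mathbb S^n)}^2 .
\]
The quickest check uses the Euler--Lagrange equation $(-\Delta)^sU=S_{n,s}\|U\|_{L^{2_s^*}}^{2-2_s^*}U^{2_s^*-1}$. Since $U$ corresponds to a constant on $\mathbb S^n$, this equation says the Euler--Lagrange coefficient $S_{n,s}\|U\|_{L^{2_s^*}}^{2-2_s^*}$ transfers to $\kappa_{n,s}\Lambda_0$. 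One then uses the identity
\[
\frac{\Lambda_1}{\Lambda_0}=\frac{\frac n2+s}{\frac n2-s}=\frac{n+2s}{n-2s}=2_s^*-1 .
\]
As a consistency check, this makes $\partial_{x_j}U$, which corresponds to degree one, lie in the kernel.

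\emph{Step 3: identify which spherical degrees occur.} For $\ell\ge2$ the rank-one term in \eqref{eq:def-hessian-frac} vanishes, because $Y_{\ell,m}$ has zero mean (see \eqref{eq:frac-rank-one-splitting}). Hence
\[
Q_{\operatorname{frac},s}(\phi)=\kappa_{n,s}\sum_k(\Lambda_k-\Lambda_1)\|v_k\|^2 .
\]
The key structural claim is that $\phi=f(r)Y_{\ell,m}(\theta)$ transfers to
\[
v(\Theta)=g(\Theta_{n+1})\,Y_{\ell,m}\!\left(\Theta'/|\Theta'|\right),
\]
and that such a function has nonzero components only in degrees $k\ge\ell$. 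This follows from the branching rule $\mathcal H_k(\mathbb S^n)|_{O(n)}=\bigoplus_{j\le k}\mathcal H_j(\mathbb S^{n-1})\otimes(\text{Gegenbauer factor})$. Equivalently, the $O(n)$-isotypic component of type $\ell$ of $\mathcal H_k(\mathbb S^n)$ vanishes for $k<\ell$: the harmonic polynomials of degree $k$ in $(\Theta',\Theta_{n+1})$ transforming like $Y_\ell$ under $O(n)$ must have $\Theta'$-degree at least $\ell$. I expect this step, together with the normalization in Step 2, to be the main obstacle. I would handle it by citing the standard branching description in \cite{DX13}, and by using that $P$ commutes with the $O(n)$ action fixing the north pole.

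\emph{Step 4: conclude.} The sequence $\Lambda_k=\Gamma(k+\frac n2+s)/\Gamma(k+\frac n2-s)$ is increasing in $k$ because $s>0$. Therefore, for $k\ge\ell$,
\[
\Lambda_k-\Lambda_1=\Lambda_k\Bigl(1-\frac{\Lambda_1}{\Lambda_k}\Bigr)\ge\Bigl(1-\frac{\Lambda_1}{\Lambda_\ell}\Bigr)\Lambda_k .
\]
Summing over $k\ge\ell$ gives $Q_{\operatorname{frac},s}(\phi)\ge(1-\Lambda_1/\Lambda_\ell)[\phi]_{\dot H^s}^2$, which is \eqref{eq:appendix-frac-sector-lower}. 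The identity $\mathcal Q^{(\ell)}_{\operatorname{frac},s}(f)=Q_{\operatorname{frac},s}(\phi)$ is just the sector decomposition \eqref{eq:frac-Q-sector-decomposition}.
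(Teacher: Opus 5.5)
Your proposal is correct and follows essentially the same route as the paper: transfer to $\mathbb S^n$ via $P$, and the branching fact that a function of $O(n)$-type $\ell$ has no component in $\mathcal H_k(\mathbb S^n)$ for $k<\ell$. The paper gets this from Dunkl--Xu's separated basis $(\sin\vartheta)^j C^{j+(n-1)/2}_{k-j}(\cos\vartheta)Y_{j,a}(\theta)$, and then, as you do, uses the diagonal form $Q_{\operatorname{frac},s}(P\eta)=\sum_{k\ge\ell}(\Lambda_k-\Lambda_1)\|\eta_k\|_{L^2(\mathbb S^n)}^2$ and the monotonicity of $\Lambda_k$. The only difference is that your Step 2 derives the coefficient $\Lambda_1$ of the weighted term from the Euler--Lagrange equation and $\Lambda_1/\Lambda_0=2_s^*-1$, whereas the paper simply imports this spectral representation from Chen--Frank--Weth.
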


\begin{proof}
Let \(P\) denote the conformal transfer from \(\mathbb S^n\) to
\(\mathbb R^n\),
\begin{equation}
\label{eq:def-conformal-transfor}
        (P\eta)(x)
=
J_\pi(x)^{\frac{1}{2_s^*}}\eta(\pi(x)),
\qquad
        J_\pi(x)=\left(\frac{2}{1+|x|^2}\right)^n.
\end{equation}
Set \(\eta=P^{-1}\phi\).  If \(x=r\theta\), then
\[
\pi(x)
=
\left(
\frac{2x}{1+|x|^2},
\frac{1-|x|^2}{1+|x|^2}
\right)
=
(\sin\vartheta\,\theta,\cos\vartheta),
\qquad
r=\tan\frac{\vartheta}{2}.
\]
Since the conformal factor is radial, the angular sector is preserved:
\(\eta(\vartheta,\theta) =  g(\vartheta)Y_{\ell,m}(\theta)\)
for a suitable one-variable function \(g\).

We now record the consequence of the separated spherical-harmonic basis on
\(\mathbb S^n\). By \cite[Theorem~1.5.1]{DX13},  in coordinates
\(\zeta=(\sin\vartheta\,\theta,\cos\vartheta)\),  \(\theta\in\mathbb S^{n-1}\),
the degree \(k\) spherical harmonics on \(\mathbb S^n\) are spanned by functions of
the form
\[
(\sin\vartheta)^j
C_{k-j}^{\,j+(n-1)/2}(\cos\vartheta)
Y_{j,a}(\theta),
\qquad 0\le j\le k .
\]
Therefore every element of \(\mathcal H_k(\mathbb S^n)\) contains only angular
degrees \(j\le k\) in the \(\theta\)-variable.  Since
\(Y_{\ell,m}\) is orthogonal on \(\mathbb S^{n-1}\) to all angular degrees
\(j<\ell\), we obtain
\(\eta_k=0\) for every  \(k<\ell\),
where \(\eta_k\) is the orthogonal projection of \(\eta\) onto
\(\mathcal H_k(\mathbb S^n)\).

By the conformal spectral representation of the linearized fractional Sobolev
quadratic form \cite[Section~3]{CFW13}, one has
\begin{align*}
Q_{\operatorname{frac},s}(\phi)=  Q_{\operatorname{frac},s}(P\eta)
&=
\sum_{k=\ell}^{\infty}
(\Lambda_k-\Lambda_1)\|\eta_k\|_{L^2(\mathbb S^n)}^2 \\
&\ge
\left(1-\frac{\Lambda_1}{\Lambda_\ell}\right)
\sum_{k=\ell}^{\infty}
\Lambda_k\|\eta_k\|_{L^2(\mathbb S^n)}^2
=
\left(1-\frac{\Lambda_1}{\Lambda_\ell}\right)
[\phi]_{\dot H^s(\mathbb R^n)}^2 .
\end{align*}
This proves the claim.
\end{proof}

\subsection{Funk--Hecke analysis of the affine correction}

We now analyze the genuinely affine part of the Hessian. Since
\(\mathcal R_s\) is a positive multiple of
\(\operatorname{Var}_{\xi}(L_\xi(\phi))\), it suffices to diagonalize this
variance with respect to the physical angular decomposition.

Recall that \eqref{eq:def-Lxi},
\[
L_\xi(\phi)=
\int_{\mathbb R^n}
|\xi\cdot\zeta|^{2s}
\widehat U(\zeta)\overline{\widehat \phi(\zeta)}\,d\zeta.
\]
Since \(U\) is radial, its Fourier
transform is radial:
\(\widehat U(\zeta)=\widehat U_0(|\zeta|)\).
By the Bochner formula for the Fourier transform of spherical harmonics
\cite[Theorem~3.10]{SW71}, writing \(\zeta=\rho\theta\), we have
\[
\widehat\phi(\rho\theta)
=
\sum_{\ell=0}^{\infty}
\sum_{m=1}^{d_\ell}
\widehat f_{\ell,m}(\rho)Y_{\ell,m}(\theta),
\]
where the harmless phase factors are absorbed into the radial transforms
\(\widehat f_{\ell,m}\).

Using polar coordinates in the Fourier variable, we obtain
\begin{equation}
\label{eq:Lxi-frac-full-before-FH}
\begin{aligned}
L_\xi(\phi)
&=
\int_{\mathbb R^n}
|\xi\cdot\zeta|^{2s}
\widehat U(\zeta)\overline{\widehat\phi(\zeta)}\,d\zeta \\
&=
\int_0^\infty
\rho^{n-1+2s}\widehat U_0(\rho)
\int_{\mathbb S^{n-1}}
|\xi\cdot\theta|^{2s}
\overline{\widehat\phi(\rho\theta)}\,d\sigma(\theta)\,d\rho
\\
&=
\sum_{\ell=0}^{\infty}
\sum_{m=1}^{d_\ell}
\int_0^\infty
\rho^{n-1+2s}
\widehat U_0(\rho)
\overline{\widehat f_{\ell,m}(\rho)}
\left(
\int_{\mathbb S^{n-1}}
|\xi\cdot\theta|^{2s}Y_{\ell,m}(\theta)\,d\sigma(\theta)
\right)d\rho .
\end{aligned}
\end{equation}
Here the factor \(\rho^{n-1+2s}\) comes from
\(d\zeta=\rho^{n-1}d\rho\,d\sigma(\theta)\) and
\(|\xi\cdot(\rho\theta)|^{2s}=\rho^{2s}|\xi\cdot\theta|^{2s}\).

By the Funk--Hecke formula \cite[Theorem~1.2.9]{DX13}, there are coefficients
\(\beta_{\ell,s}\) such that
\begin{equation}
\label{eq:appendix-FH-identity}
\frac1{|\mathbb S^{n-1}|}
\int_{\mathbb S^{n-1}}
|\xi\cdot\theta|^{2s}Y_{\ell,m}(\theta)\,d\sigma(\theta)
=
\beta_{\ell,s}Y_{\ell,m}(\xi).
\end{equation}
Substituting \eqref{eq:appendix-FH-identity} into
\eqref{eq:Lxi-frac-full-before-FH} gives
\[
L_\xi(\phi)
=
\sum_{\ell=0}^{\infty}
\sum_{m=1}^{d_\ell}
\rho_{\ell,s}Y_{\ell,m}(\xi)I_\ell(f_{\ell,m}),
\]
where \( \rho_{\ell,s}=\frac{\beta_{\ell,s}}{\beta_{0,s}}\),
and
\begin{equation}
\label{eq:def-If}
I_\ell(f)
=
|\mathbb S^{n-1}|\beta_{0,s}
\int_0^\infty
\rho^{n-1+2s}
\widehat U_0(\rho)\overline{\widehat f_\ell(\rho)}\,d\rho .
\end{equation}

Since \(Y_{0,1}\equiv1\), the radial contribution \(\ell=0\) is independent of
\(\xi\) and therefore does not contribute to the variance. Using \eqref{eq:def-var}, \eqref{eq:normalization},
\begin{equation}
\label{eq:variance-Lxi-frac-sector-block}
\operatorname{Var}_{\xi\in\mathbb S^{n-1}}
\bigl(L_\xi(\phi)\bigr)
=
\sum_{\ell=1}^{\infty}
\sum_{m=1}^{d_\ell}
\rho_{\ell,s}^2 I_\ell(f_{\ell,m})^2 .
\end{equation}

It remains to compute the coefficients \(\rho_{\ell,s}\).

First assume \(n\ge3\), and set
\( \lambda=\frac{n-2}{2}\).
By the Funk--Hecke formula \cite[Theorem~1.2.9]{DX13},
\begin{equation}
\label{eq:appendix-beta-Gegenbauer}
\beta_{\ell,s}
=
\frac{|\mathbb S^{n-2}|}
{|\mathbb S^{n-1}|\,C_\ell^\lambda(1)}
\int_{-1}^{1}
|t|^{2s}C_\ell^\lambda(t)(1-t^2)^{\lambda-\frac12}\,dt .
\end{equation}
Since
\[
C_\ell^\lambda(-t)=(-1)^\ell C_\ell^\lambda(t)
\qquad\text{\cite[Section~10.9~(16)]{EMOT81}},
\]
the integral in \eqref{eq:appendix-beta-Gegenbauer} vanishes for every odd
\(\ell\). Hence
\(\rho_{\ell,s}=0\) for every odd \(\ell\), \(n\ge3\).

We next compute the even coefficients. Let \(\ell=2m\). By
\cite[Section~10.9~(21)]{EMOT81},
\[
\begin{aligned}
C_{2m}^{\lambda}(\sqrt u)
&=
(-1)^m\frac{(\lambda)_m}{m!}
{}_2F_1
\left(
-m,\ m+\lambda;\ \frac12;\ u
\right)  \\
&=
(-1)^m\frac{(\lambda)_m}{m!}
\sum_{k=0}^{m}
\frac{(-m)_k(m+\lambda)_k}{\left(\frac12\right)_k}
\frac{u^k}{k!}.
\end{aligned}
\]

Then
\begin{align*}
\mathcal J_{2m,s}
&:=
\int_{-1}^{1}
|t|^{2s}C_{2m}^{\lambda}(t)(1-t^2)^{\lambda-\frac12}\,dt \\
&=
\int_0^1
u^{s-\frac12}(1-u)^{\lambda-\frac12}
C_{2m}^{\lambda}(\sqrt u)\,du
\\
&=
(-1)^m
\frac{(\lambda)_m}{m!}
\int_0^1
u^{s-\frac12}(1-u)^{\lambda-\frac12}
{}_2F_1
\left(
-m,\ m+\lambda;\ \frac12;\ u
\right)\,du
\\
&=
(-1)^m
\frac{(\lambda)_m}{m!}
B\left(s+\frac12,\lambda+\frac12\right)
{}_3F_2
\left(
\begin{matrix}
-m,\ m+\lambda,\ s+\frac12\\
\frac12,\ \lambda+s+1
\end{matrix}
;1
\right)
\\
&=
(-1)^m
\frac{(\lambda)_m}{m!}
B\left(s+\frac12,\lambda+\frac12\right)
\frac{
\left(\lambda+\frac12\right)_m
(-s)_m
}{
\left(\frac12\right)_m
(\lambda+s+1)_m
} .
\end{align*}
In the last step we used Saalsch\"utz's summation formula
\cite[Theorem~2.2.6]{AAR99},
\[
\begin{aligned}
&{}_3F_2
\left(
\begin{matrix}
-m,\ m+\lambda,\ s+\frac12\\
\frac12,\ \lambda+s+1
\end{matrix}
;1
\right)
=
\sum_{k=0}^{m}
\frac{
(-m)_k(m+\lambda)_k\left(s+\frac12\right)_k
}{
\left(\frac12\right)_k(\lambda+s+1)_k
}
\frac1{k!}
=
\frac{
\left(\lambda+\frac12\right)_m
(-s)_m
}{
\left(\frac12\right)_m
(\lambda+s+1)_m
}.
\end{aligned}
\]
On the other hand, by \cite[Corollary~2.2.3]{AAR99},
\[
\mathcal J_{0,s}
=
B\left(s+\frac12,\lambda+\frac12\right),
\qquad
C_{2m}^{\lambda}(1)
=
\frac{(\frac{1}{2}-m-\lambda)_{m}}{\left(\frac{1}{2}\right)_m}
=
\frac{
(\lambda)_m\left(\lambda+\frac12\right)_m
}{
m!\left(\frac12\right)_m
}.
\]

Therefore,
\begin{equation}
\label{eq:rho2ms-nge3}
\rho_{2m,s}
=
\frac{\beta_{2m,s}}{\beta_{0,s}}
=
\frac{\mathcal J_{2m,s}}
{C_{2m}^{\lambda}(1)\mathcal J_{0,s}}
=
(-1)^m
\frac{(-s)_m}{(\lambda+s+1)_m}
=
(-1)^m
\frac{(-s)_m}{(\frac n2+s)_m}.
\end{equation}

It remains to discuss the case \(n=2\). With the normalized real Fourier basis
\[
Y_0=1,\qquad
Y_{\ell,c}(\vartheta)=\sqrt2\cos(\ell\vartheta),
\qquad
Y_{\ell,s}(\vartheta)=\sqrt2\sin(\ell\vartheta),
\quad \ell\ge1,
\]
the Funk--Hecke coefficient is
\[
\beta_{\ell,s}
=
\frac1{2\pi}
\int_0^{2\pi}
|\cos\vartheta|^{2s}\cos(\ell\vartheta)\,d\vartheta .
\]
Since \(|\cos\vartheta|^{2s}\) is even and \(\pi\)-periodic, this coefficient
vanishes for every odd \(\ell\). For \(\ell=2m\), the classical Fourier
coefficient formula for \(|\cos\vartheta|^{2s}\) gives
\begin{equation}
\label{eq:rho2ms-n2}
\rho_{2m,s}
=
\frac{
\displaystyle
\int_0^{2\pi}
|\cos\vartheta|^{2s}\cos(2m\vartheta)\,d\vartheta
}{
\displaystyle
\int_0^{2\pi}
|\cos\vartheta|^{2s}\,d\vartheta
}
=
\frac{\Gamma(s+1)^2}
{\Gamma(s-m+1)\Gamma(s+m+1)}
=
(-1)^m
\frac{(-s)_m}{(s+1)_m}.
\end{equation}
The formulas
\eqref{eq:rho2ms-nge3} and \eqref{eq:rho2ms-n2} combine to give, for all
\(n\ge2\),
\begin{equation}
\label{eq:rho2ms}
\rho_{2m,s}
=
\frac{\beta_{2m,s}}{\beta_{0,s}}
=
(-1)^m
\frac{(-s)_m}{(\frac n2+s)_m},
\qquad m\ge0.
\end{equation}
In particular,
\begin{equation}
\label{eq:appendix-rho-ratio}
\frac{|\rho_{2m+2,s}|}{|\rho_{2m,s}|}
=
\frac{m-s}{m+\frac n2+s}
<1,
\qquad m\ge1 .
\end{equation}
Together with the odd vanishing above, we have
\begin{equation}
\label{eq:appendix-rho-odd-zero}
\rho_{\ell,s}=0
\qquad
\text{for every odd }\ell .
\end{equation}

For \(\ell\ge1\) and \(m=1,\dots,d_\ell\), set
\[
\psi_{\ell,m}(r,\theta)
:=
f_{\ell,m}(r)Y_{\ell,m}(\theta).
\]
Since the affine correction is rotation invariant, the corresponding
one-component form depends on the angular degree \(\ell\), but not on the index
\(m\). We therefore define
\[
\mathcal R_s^{(\ell)}(f_{\ell,m})
:=
\mathcal R_s(\psi_{\ell,m}).
\]

\begin{lemma}
\label{lem:appendix-affine-correction-bound}
Assume \(0<s<1\), \(n\ge2\), and let \(\ell\ge1\). Then, for every
\(m=1,\dots,d_\ell\),
\begin{equation}
\label{eq:appendix-affine-correction-bound}
\mathcal R_s^{(\ell)}(f_{\ell,m})
\le
\frac{n+2s}{s}
\rho_{\ell,s}^2
[\psi_{\ell,m}]_{\dot H^s(\mathbb R^n)}^2 .
\end{equation}
In particular,
\(\mathcal R_s^{(\ell)}\equiv0\) for every odd \(\ell\).
\end{lemma}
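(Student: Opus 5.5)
The plan is to write $\mathcal R_s(\psi_{\ell,m})$ explicitly through \eqref{eq:variance-Lxi-frac-sector-block} and then bound the single radial pairing $I_\ell(f_{\ell,m})$ by Cauchy--Schwarz in the Fourier radial variable. The constants are arranged so that the Cauchy--Schwarz factor reproduces exactly $A_0$ and $[\psi_{\ell,m}]_{\dot H^s}^2$.

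\emph{Step 1 (the variance of one component).} For $\psi=\psi_{\ell,m}=f_{\ell,m}Y_{\ell,m}$ with $\ell\ge1$, the Funk--Hecke computation preceding the lemma gives
\[
L_\xi(\psi)=\rho_{\ell,s}\,Y_{\ell,m}(\xi)\,I_\ell(f_{\ell,m}).
\]
Since $\ell\ge1$, we have $\langle Y_{\ell,m}\rangle_{\mathbb S^{n-1}}=0$, and by \eqref{eq:normalization} $\langle Y_{\ell,m}^2\rangle_{\mathbb S^{n-1}}=1$. Hence
\[
\operatorname{Var}_\xi\bigl(L_\xi(\psi)\bigr)=\rho_{\ell,s}^2\,|I_\ell(f_{\ell,m})|^2,
\]
and \eqref{eq:def-Rs} gives
\[
\mathcal R_s^{(\ell)}(f_{\ell,m})=m_{n,s}^{-1}\frac{n+2s}{sA_0}\,\rho_{\ell,s}^2\,|I_\ell(f_{\ell,m})|^2 .
\]
If a Bochner phase $i^{-\ell}$ appears, it only affects the argument of $I_\ell$ and not its modulus, so it is harmless here.

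\emph{Step 2 (identifying the constants).} Averaging \eqref{eq:appendix-FH-identity} with $\ell=0$ gives $\beta_{0,s}=\langle|\xi\cdot\theta|^{2s}\rangle_\theta=m_{n,s}$. From \eqref{eq:Axu-1},
\[
\int_0^\infty\rho^{n-1+2s}|\widehat U_0|^2\,d\rho=\frac{A_0}{|\mathbb S^{n-1}|\,m_{n,s}}.
\]
Next, $|\widehat\psi(\rho\theta)|=|\widehat f_{\ell,m}(\rho)|\,|Y_{\ell,m}(\theta)|$ and $\int_{\mathbb S^{n-1}}Y_{\ell,m}^2\,d\sigma=|\mathbb S^{n-1}|$. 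Polar coordinates in \eqref{eq:seminorm} therefore give
\[
\int_0^\infty\rho^{n-1+2s}|\widehat f_{\ell,m}|^2\,d\rho=\frac{[\psi]_{\dot H^s}^2}{|\mathbb S^{n-1}|}.
\]

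\emph{Step 3 (Cauchy--Schwarz).} Split the weight as $\rho^{n-1+2s}=\rho^{\frac{n-1+2s}{2}}\cdot\rho^{\frac{n-1+2s}{2}}$ in \eqref{eq:def-If} and apply Cauchy--Schwarz:
\[
|I_\ell(f)|^2\le |\mathbb S^{n-1}|^2m_{n,s}^2\cdot\frac{A_0}{|\mathbb S^{n-1}|m_{n,s}}\cdot\frac{[\psi]_{\dot H^s}^2}{|\mathbb S^{n-1}|}=m_{n,s}A_0[\psi]_{\dot H^s}^2 .
\]
Inserting this into Step 1, the factors $m_{n,s}$ and $A_0$ cancel, which yields exactly \eqref{eq:appendix-affine-correction-bound}. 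The vanishing of $\mathcal R_s^{(\ell)}$ for odd $\ell$ then follows immediately from \eqref{eq:appendix-rho-odd-zero}.

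The only delicate point is the bookkeeping of normalizations. One has to keep straight the averaged versus unnormalized sphere measure, the identification $\beta_{0,s}=m_{n,s}$, and the fact that the Fourier transform of $fY$ is again $\widehat f\,Y$ up to a unimodular phase. These are what make the constant exactly $\frac{n+2s}{s}\rho_{\ell,s}^2$ with no stray factor. I would check them carefully, and the rest is a one-line Cauchy--Schwarz.
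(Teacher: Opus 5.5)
Your proposal is correct and is essentially the paper's own proof. You write $\mathcal R_s^{(\ell)}(f_{\ell,m})=m_{n,s}^{-1}\frac{n+2s}{sA_0}\rho_{\ell,s}^2|I_\ell(f_{\ell,m})|^2$ using the one-component variance, identify $\beta_{0,s}=m_{n,s}$, and obtain $|I_\ell(f_{\ell,m})|^2\le A_0\,m_{n,s}[\psi_{\ell,m}]_{\dot H^s(\mathbb R^n)}^2$ by Cauchy--Schwarz exactly as in \eqref{eq:Axfy}--\eqref{eq:Ifs}. Your normalization bookkeeping checks out, and the odd-degree vanishing follows from \eqref{eq:appendix-rho-odd-zero}.
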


\begin{proof}
Fix one component
\[
\psi_{\ell,m}(r,\theta)
=
f_{\ell,m}(r)Y_{\ell,m}(\theta).
\]
By \eqref{eq:def-Rs}, \eqref{eq:variance-Lxi-frac-sector-block},
\begin{equation}
\label{eq:R-sector-Iell}
\mathcal R_s^{(\ell)}(f_{\ell,m})
=
m_{n,s}^{-1}
\frac{n+2s}{sA_0}
        \operatorname{Var}_{\xi\in\mathbb S^{n-1}}
\bigl(L_\xi(\psi_{\ell,m})\bigr)=
m_{n,s}^{-1}
\frac{n+2s}{sA_0}
\rho_{\ell,s}^2
|I_\ell(f_{\ell,m})|^2 .
\end{equation}

It remains to estimate \(I_\ell(f_{\ell,m})\). By \eqref{eq:intro-Axi-frac}, \eqref{eq:intro-average-A},
the Bochner decomposition, and \eqref{eq:appendix-FH-identity},
\begin{equation}
\label{eq:Axfy}
\begin{aligned}
m_{n,s}[\psi_{\ell,m}]_{\dot H^s(\mathbb R^n)}^2
&=
\left\langle
A_\xi(\psi_{\ell,m})
\right\rangle_{\mathbb S^{n-1}}
=
|\mathbb S^{n-1}|\beta_{0,s}
\int_0^\infty
\rho^{n-1+2s}
|\widehat f_{\ell,m}(\rho)|^2\,d\rho .
\end{aligned}
\end{equation}
Here \(\beta_{0,s}=m_{n,s}\).

By \eqref{eq:Axu-1}, \eqref{eq:def-If}, \eqref{eq:Axfy}, and Cauchy--Schwarz,
\begin{equation}
\label{eq:Ifs}
\begin{aligned}
|I_\ell(f_{\ell,m})|^2
&\le
\left(
|\mathbb S^{n-1}|\beta_{0,s}
\int_0^\infty
\rho^{n-1+2s}|\widehat U_0(\rho)|^2\,d\rho
\right)\\
&\quad\times
\left(
|\mathbb S^{n-1}|\beta_{0,s}
\int_0^\infty
\rho^{n-1+2s}|\widehat f_{\ell,m}(\rho)|^2\,d\rho
\right)=
A_0\,m_{n,s}
[\psi_{\ell,m}]_{\dot H^s(\mathbb R^n)}^2 .
\end{aligned}
\end{equation}
Substituting \eqref{eq:Ifs} into \eqref{eq:R-sector-Iell} gives
\[
\mathcal R_s^{(\ell)}(f_{\ell,m})
\le
\frac{n+2s}{s}
\rho_{\ell,s}^2
[\psi_{\ell,m}]_{\dot H^s(\mathbb R^n)}^2 .
\]
This proves \eqref{eq:appendix-affine-correction-bound}. Finally,
\eqref{eq:appendix-rho-odd-zero} gives
\(\rho_{\ell,s}=0\) for every odd \(\ell\),
and hence \(\mathcal R_s^{(\ell)}\equiv0\) for every odd \(\ell\).
\end{proof}

\subsection{Identification of the degree-two affine kernel}

The degree-two sector is the only physical angular sector in which new affine
Jacobi fields may occur. We now identify this kernel.

\begin{lemma}
\label{lem:degree-two-affine-sector}
Assume \(0<s<1\) and \(n\ge2\).
\[
\left\{\phi_2:\mathcal Q_U[\phi_2]=0\right\}
=
\left\{
x\cdot B\nabla U:
B=B^T,\ \operatorname{tr}B=0
\right\}.
\]
\end{lemma}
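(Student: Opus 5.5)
We would prove both inclusions by working one spherical-harmonic component at a time inside the degree-two sector. The key observation is that on each component the affine correction \(\mathcal R_s\) is a rank-one form, while \(\mathcal Q_{\operatorname{frac},s}\) is positive definite there.

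\emph{Step 1: reduction to single components.} Write \(\phi_2=\sum_{m=1}^{d_2}f_m(r)Y_{2,m}(\theta)\). By \eqref{eq:frac-Q-sector-decomposition}, the classical Hessian splits as \(\sum_m\mathcal Q^{(2)}_{\operatorname{frac},s}(f_m)\). By \eqref{eq:variance-Lxi-frac-sector-block}, the variance also splits as \(\sum_m\rho_{2,s}^2|I_2(f_m)|^2\), with no cross terms thanks to the normalization \eqref{eq:normalization}. Hence
\[
\mathcal Q_U[\phi_2]=\sum_{m=1}^{d_2}\Bigl(\mathcal Q^{(2)}_{\operatorname{frac},s}(f_m)-\kappa\,|I_2(f_m)|^2\Bigr),
\qquad
\kappa:=m_{n,s}^{-1}\frac{n+2s}{sA_0}\rho_{2,s}^2>0 .
\]
By Lemma~\ref{lem:sharp-classical-frac-sector-bound}, \(\mathcal Q^{(2)}_{\operatorname{frac},s}\) is coercive on the component space. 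The map \(f\mapsto I_2(f)\) is a bounded linear functional, by \eqref{eq:Ifs}. So each summand is a coercive form minus a rank-one nonnegative form.

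\emph{Step 2: each summand is nonnegative.} Since \(U\) minimizes \(\delta_{\operatorname{aff},s}\ge0\) with \(\delta_{\operatorname{aff},s}(U)=0\), we get \(\mathcal Q_U\ge0\) on all of \(\dot H^s\). Restricting to \(\phi_2=f Y_{2,m}\) shows that each summand is nonnegative. A standard rank-one argument then applies. Write \(\mathcal Q^{(2)}_{\operatorname{frac},s}(f)=\langle Tf,f\rangle\) with \(T\) invertible, and \(I_2(f)=\langle g,f\rangle\). Nonnegativity of \(T-\kappa\,g\otimes g\) forces \(\kappa\langle T^{-1}g,g\rangle\le1\). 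Its kernel is \(\{0\}\) unless equality holds, in which case the kernel is exactly \(\operatorname{span}\{T^{-1}g\}\). Thus every summand has a kernel of dimension at most one, spanned by a single radial profile \(f_*\) that does not depend on \(m\).

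\emph{Step 3: the affine Jacobi fields lie in the kernel.} For \(B=B^T\) with \(\operatorname{tr}B=0\), the curve \(t\mapsto T^{-1}_{e^{tB},0}U\) stays in \(\mathcal M_{\rm cone}\), so \(\delta_{\operatorname{aff},s}\) vanishes identically along it. Its derivative at \(t=0\) is, up to sign, \(x\cdot B\nabla U=rU'(r)\,\theta\cdot B\theta\). Since \(\delta_{\operatorname{aff},s}\ge0\) is \(C^2\) near \(U\) and vanishes on the whole curve, its Hessian vanishes in the tangent direction: \(\mathcal Q_U(x\cdot B\nabla U)=0\). Here \(\theta\mapsto\theta\cdot B\theta\) runs over all of \(\mathcal H_2(\mathbb S^{n-1})\). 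Hence \(rU'(r)Y_{2,m}(\theta)\) lies in the kernel for every \(m\), and \(rU'\neq0\) gives \(f_*\propto rU'\).

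\emph{Step 4: conclusion.} If \(\mathcal Q_U[\phi_2]=0\), then every nonnegative summand vanishes, so each \(f_m\in\operatorname{span}\{rU'\}\) by Step 2. Therefore \(\phi_2=rU'(r)\sum_m c_mY_{2,m}(\theta)=x\cdot B\nabla U\) for the trace-free \(B\) representing \(\sum_m c_mY_{2,m}\). The reverse inclusion is Step 3.

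The main obstacle is making Step 3 rigorous: we need enough regularity of \(\delta_{\operatorname{aff},s}\) near \(U\) (the map \(\Phi\) is smooth at positive constants) so that \(\mathcal Q_U\ge0\) and \(\mathcal Q_U(x\cdot B\nabla U)=0\) follow from the expansion \eqref{eq:affine-energy-second-expansion}. If that proves delicate, we would instead verify directly that \(\mathcal Q^{(2)}_{\operatorname{frac},s}(rU')=\kappa|I_2(rU')|^2\). This uses \(\widehat{x\cdot B\nabla U}(\rho\theta)=-\rho\,\widehat U_0'(\rho)\,\theta\cdot B\theta\) together with the conformal representation of \(rU'Y_{2,m}\). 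That identity is a computation that we expect to be messy.
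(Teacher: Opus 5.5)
Your proposal is correct and follows the paper's proof essentially step for step. The paper likewise reduces to single components $fY_{2,m}$ and writes $\mathcal Q_U^{(2)}(f)=\|f\|_2^2-\alpha_2|\langle f,w\rangle_2|^2$ by Riesz representation in the $Q^{(2)}_{\operatorname{frac},s}$-inner product (your $T^{-1}g$); it then uses the affine Jacobi field $h=rU'$ to force $\alpha_2\|w\|_2^2=1$ and $w\propto h$, and concludes by the equality case of Cauchy--Schwarz. Your explicit derivation of $\mathcal Q_U\ge0$ and $\mathcal Q_U(x\cdot B\nabla U)=0$ from the minimality of $U$ spells out a step the paper only uses implicitly, so the fallback computation you mention is not needed.
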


\begin{proof}
Set
\(  h(r)=rU'(r)\).
We first note that every trace-free symmetric affine direction is a zero mode.
Indeed, if \(B=B^T\) and \(\operatorname{tr}B=0\), then
\(t\mapsto T_{e^{tB},0}^{-1}U\) is a curve of affine extremals. Differentiating
at \(t=0\) gives \(x\cdot B\nabla U\in\ker\mathcal Q_U\).
Moreover,
\[
x\cdot B\nabla U
=
rU'(r)\,\theta\cdot B\theta,
\qquad x=r\theta,
\]
and \(\theta\cdot B\theta\in\mathcal H_2(\mathbb S^{n-1})\) whenever
\(\operatorname{tr}B=0\). Thus these zero modes lie in the degree-two sector.

It remains to show that there are no further zero modes. For a single
degree-two component \(f(r)Y_{2,m}(\theta)\), define
\[
\mathcal Q_U^{(2)}(f)
:=
Q_{\operatorname{frac},s}^{(2)}(f)
-
\mathcal R_s^{(2)}(f).
\]
By the definition of \(\mathcal R_s^{(2)}\),
\[
\mathcal Q_U^{(2)}(f)
=
Q_{\operatorname{frac},s}^{(2)}(f)
-
\alpha_2|I_2(f)|^2,
\qquad
\alpha_2
:=
m_{n,s}^{-1}
\frac{n+2s}{sA_0}\rho_{2,s}^2 .
\]

By \eqref{eq:frac-higher-kernel},
\(Q_{\operatorname{frac},s}^{(2)}\) is positive definite. We write
\[
\|f\|_2^2:=Q_{\operatorname{frac},s}^{(2)}(f),
\qquad
\langle f,g\rangle_2:=Q_{\operatorname{frac},s}^{(2)}(f,g).
\]

By \eqref{eq:appendix-frac-sector-lower}; \eqref{eq:Ifs}, with \(\ell=2\), we have
\[
|I_2(f)|^2
\le
\frac{A_0m_{n,s}}
{1-\frac{\Lambda_1}{\Lambda_2}}
Q_{\operatorname{frac},s}^{(2)}(f)
=
C_{n,s,U}\|f\|_2^2 .
\]
Therefore \(f\mapsto I_2(f)\) is continuous with respect to the Hilbert norm
\(\|f\|_2^2=Q_{\operatorname{frac},s}^{(2)}(f)\).
Hence, by the Riesz representation theorem, there exists
\(w\) such that
\( I_2(f)=\langle f,w\rangle_2 \).
Consequently,
\begin{equation}
\label{eq:degree-two-rank-one-form}
0\le \mathcal Q_U^{(2)}(f)
=
\|f\|_2^2
-
\alpha_2|\langle f,w\rangle_2|^2 .
\end{equation}
Therefore
\begin{equation}
\label{eq:alpha-w-bound}
\alpha_2\|w\|_2^2\le1 .
\end{equation}

For each basis element \(Y_{2,m}\), choose a trace-free symmetric matrix
\(B_m\) such that
\( Y_{2,m}(\theta)=\theta\cdot B_m\theta\).
Then
\[
x\cdot B_m\nabla U
=
rU'(r)Y_{2,m}(\theta)
=
h(r)Y_{2,m}(\theta)
\]
is a zero mode. Hence
\( \mathcal Q_U^{(2)}(h)=0\).
Using \eqref{eq:degree-two-rank-one-form} and \eqref{eq:alpha-w-bound}, we get
\[
0
=
\|h\|_2^2
-
\alpha_2|\langle h,w\rangle_2|^2
\ge
\|h\|_2^2
\left(1-\alpha_2\|w\|_2^2\right)
\ge0.
\]
Thus equality holds in Cauchy--Schwarz. Hence \(h\) is proportional to \(w\),
and
\(\alpha_2\|w\|_2^2=1\).
Now suppose \(\mathcal Q_U^{(2)}(f)=0\). Then
\[
\|f\|_2^2
=
\alpha_2|\langle f,w\rangle_2|^2
\le
\alpha_2\|f\|_2^2\|w\|_2^2
=
\|f\|_2^2 .
\]
Again equality holds in Cauchy--Schwarz, and therefore \(f\) is proportional to
\(w\), equivalently to \(h=rU'\). Hence
\[
\ker \mathcal Q_U^{(2)}
=
\operatorname{span}\{rU'\}
\]
for each fixed degree-two angular component.

Therefore
\[
\mathcal Q_U[\phi_2]=0
\quad\Longleftrightarrow\quad
f_{2,m}\in\operatorname{span}\{rU'\}
\quad\text{for every }m=1,\dots,d_2.
\]
Thus  there exists \(B=B^T\), \(\operatorname{tr}B=0\),
such that
\[
\phi_2(r,\theta)
=
rU'(r)\sum_{m=1}^{d_2}c_mY_{2,m}(\theta)
        =
rU'(r)\,\theta\cdot B\theta
=
x\cdot B\nabla U(x).
\]
This proves the lemma.
\end{proof}

In fact, the proof of Lemma~\ref{lem:degree-two-affine-sector} gives, for a fixed
degree-two spherical harmonic \(Y_{2,m}\), one has
\begin{equation}
\label{eq:degree-two-projection-form}
\mathcal Q_U^{(2)}(f)
=
\|f\|_2^2
-
\frac{
|\langle f,h\rangle_2|^2
}{
\|h\|_2^2
},
\qquad
h(r)=rU'(r),
\end{equation}

As a consequence, after projecting out the affine degree-two zero modes, the
degree-two sector satisfies the same sharp lower bound as the classical
fractional Hessian.

\begin{corollary}
\label{cor:sharp-degree-two-sector-gap}
Let \(0<s<1\) and \(n\ge2\). Let
\[
\phi_2
\perp_{\dot H^s(\mathbb R^n)}
\left\{
x\cdot B\nabla U:
B=B^T,\ \operatorname{tr}B=0
\right\}.
\]
Then
\begin{equation}
\label{eq:sharp-degree-two-sector-gap}
\mathcal Q_U[\phi_2]
\ge
\gamma_s[\phi_2]_{\dot H^s(\mathbb R^n)}^2,
\qquad
\gamma_s
=
\frac{2s}{\frac n2+s+1}.
\end{equation}
\end{corollary}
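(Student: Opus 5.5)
Work one degree-two component at a time. Write $\phi_2=\sum_{m}f_{2,m}(r)Y_{2,m}(\theta)$. By \eqref{eq:frac-Hs-sector-splitting}, the $\dot H^s$ norm splits over $m$. The Hessian does too: $\mathcal Q_{\operatorname{frac},s}$ splits by \eqref{eq:frac-Q-sector-decomposition}, and $\mathcal R_s$ splits by \eqref{eq:variance-Lxi-frac-sector-block}. So $\mathcal Q_U[\phi_2]=\sum_m \mathcal Q_U^{(2)}(f_{2,m})$. Orthogonality to all $x\cdot B\nabla U=h(r)\,\theta\cdot B\theta$, with $h=rU'$, amounts to $[f_{2,m}Y_{2,m},\,hY_{2,m}]_{\dot H^s}=0$ for every $m$. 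This holds because $\theta\cdot B\theta$ ranges over all of $\mathcal H_2(\mathbb S^{n-1})$ and distinct $Y_{2,m}$ are $\dot H^s$-orthogonal, by rotation invariance of the $\dot H^s$ inner product. It therefore suffices to prove $\mathcal Q_U^{(2)}(f)\ge\gamma_s[fY]_{\dot H^s}^2$ for a single component $f$ with $fY\perp_{\dot H^s}hY$.

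Next pass to the sphere. Put $\eta=P^{-1}(fY)$ and $\eta_0=P^{-1}(hY)$. By the proof of Lemma~\ref{lem:sharp-classical-frac-sector-bound}, $\eta=\sum_{k\ge2}\eta_k$, and both quadratic forms are diagonal in $k$:
\[
[fY]_{\dot H^s}^2=\sum_{k\ge2}\Lambda_k\|\eta_k\|^2_{L^2(\mathbb S^n)},
\qquad
Q^{(2)}_{\operatorname{frac},s}(f)=\sum_{k\ge2}(\Lambda_k-\Lambda_1)\|\eta_k\|^2_{L^2(\mathbb S^n)}.
\]
The key claim is that $hY=x\cdot B\nabla U$ is the conformal image of the pure degree-two spherical harmonic $\Theta'\cdot B\Theta'$. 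This is exactly the decomposition of $\mathcal H_2(\mathbb S^n)$ used in the proof of Proposition~\ref{pro:sharp-local-affine-coeff}. I will check it directly: apply $P^{-1}$ to $x\cdot B\nabla U$ and use $\pi(x)'=2x/(1+|x|^2)$. Since $U$ is, up to a constant, $P1$, one gets $x\cdot B\nabla U=c\,P(\Theta'\cdot B\Theta')$ from a short computation with $U'(r)=-(n-2s)rU/(1+r^2)$. Hence $\eta_0\in\mathcal H_2(\mathbb S^n)$.

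Now use the $\dot H^s$ orthogonality of $fY$ and $hY$. On the sphere it reads $\Lambda_2\langle\eta_2,\eta_0\rangle_{L^2}=0$, so $\eta_2\perp\eta_0$. By \eqref{eq:degree-two-projection-form},
\[
\mathcal Q_U^{(2)}(f)=\|f\|_2^2-\frac{|\langle f,h\rangle_2|^2}{\|h\|_2^2},
\]
and $\langle f,h\rangle_2=(\Lambda_2-\Lambda_1)\langle\eta_2,\eta_0\rangle_{L^2}=0$ because $\eta_0$ has only a degree-two component. Hence $\mathcal Q_U^{(2)}(f)=Q^{(2)}_{\operatorname{frac},s}(f)=\sum_{k\ge2}(\Lambda_k-\Lambda_1)\|\eta_k\|^2$. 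This is at least $(1-\Lambda_1/\Lambda_2)[fY]^2_{\dot H^s}$, because $\Lambda_k$ increases in $k$.

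It remains to evaluate $1-\Lambda_1/\Lambda_2$. Using $\Gamma(z+1)=z\Gamma(z)$,
\[
\frac{\Lambda_1}{\Lambda_2}=\frac{\frac n2+1-s}{\frac n2+1+s},
\qquad
1-\frac{\Lambda_1}{\Lambda_2}=\frac{2s}{\frac n2+s+1}=\gamma_s.
\]
Summing over $m$ gives \eqref{eq:sharp-degree-two-sector-gap}.

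The main obstacle is the identification of $hY$ as the stereographic image of a \emph{pure} degree-two harmonic on $\mathbb S^n$; this is what makes the rank-one correction vanish under plain $\dot H^s$-orthogonality. If the direct computation becomes messy, I would instead argue abstractly. The curve $t\mapsto T^{-1}_{e^{tB},0}U$ transfers to a family on $\mathbb S^n$ generated by a conformal-type vector field. Its derivative at $U$ lies in the degree-$\le2$ spherical harmonics on $\mathbb S^n$. It is orthogonal to degrees $0$ and $1$, because its $\theta$-degree is $2$ and by the Gegenbauer structure cited from \cite{DX13}. Therefore it lies in $\mathcal H_2(\mathbb S^n)$.
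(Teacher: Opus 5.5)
There is a genuine gap, and it sits exactly at the step you flagged as the main obstacle. The claim that $x\cdot B\nabla U$ is a constant multiple of $P(\Theta'\cdot B\Theta')$ is false.

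Carry out your own computation. You get $x\cdot B\nabla U=-(n-2s)\frac{x\cdot Bx}{1+|x|^2}\,U$. On the other hand, $P(\Theta'\cdot B\Theta')=2^{\frac{n-2s}{2}+2}\frac{x\cdot Bx}{(1+|x|^2)^2}\,U$. The powers of $1+|x|^2$ do not match. Using $x=\Theta'/(1+\Theta_{n+1})$ and $1+|x|^2=2/(1+\Theta_{n+1})$, one finds
\[
P^{-1}(x\cdot B\nabla U)=-(n-2s)2^{-\frac{n-2s}{2}-1}\,\frac{\Theta'\cdot B\Theta'}{1+\Theta_{n+1}}
=c\,(1-\cos\vartheta)\,\theta\cdot B\theta .
\]
For trace-free $B\neq0$ this function is not even continuous at the south pole: its limit there is $2c\,\theta\cdot B\theta$, which depends on the direction $\theta$. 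So it is not a finite sum of spherical harmonics. It has nonzero components in infinitely many degrees $k\ge2$, since the profile $1/(1+t)$ has an infinite Gegenbauer expansion.

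Your fallback fails for the same reason. The flow $e^{tB}$ with trace-free $B\neq0$ is not conformal, so its generator is not of degree $\le2$ on $\mathbb S^n$. Only the conformal generators $U$, $Z_0$, $\partial_{x_j}U$ transfer to degrees $0$ and $1$. The same identification is asserted in the proof of Proposition~\ref{pro:sharp-local-affine-coeff}, but the paper's proof of this corollary does not rely on it.

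As a consequence, put $\zeta:=P^{-1}(hY)$. Then $\dot H^s$-orthogonality only gives $\sum_k\Lambda_k\langle\eta_k,\zeta_k\rangle=0$. Meanwhile
\[
\langle f,h\rangle_2=\sum_k(\Lambda_k-\Lambda_1)\langle\eta_k,\zeta_k\rangle=-\Lambda_1\sum_k\langle\eta_k,\zeta_k\rangle ,
\]
and this is generally nonzero; take $\eta$ a suitable combination of $\zeta_2$ and $\zeta_3$. So the rank-one correction does not vanish, and $\mathcal Q_U^{(2)}(f)=Q^{(2)}_{\operatorname{frac},s}(f)$ is false in general. The inequality itself is still true, but it needs an argument that tolerates $\langle f,h\rangle_2\neq0$.

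The missing idea, which is the paper's route, is this. Set $b(f,g):=\langle f,g\rangle_2-\gamma_s\langle fY,gY\rangle_{\dot H^s}$, a nonnegative form by Lemma~\ref{lem:sharp-classical-frac-sector-bound}. Orthogonality gives $\langle f,h\rangle_2=b(f,h)$. Then combine \eqref{eq:degree-two-projection-form}, Cauchy--Schwarz for $b$, and $b(h,h)\le\|h\|_2^2$:
\[
\mathcal Q_U^{(2)}(f)-\gamma_s[fY]_{\dot H^s}^2=b(f,f)-\frac{b(f,h)^2}{\|h\|_2^2}\ge b(f,f)-\frac{b(f,h)^2}{b(h,h)}\ge0 .
\]
If $b(h,h)=0$, then $b(f,h)=0$ and the conclusion is immediate.

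The rest of your proposal is fine: the componentwise reduction, the reduction of orthogonality to each $m$, and the evaluation $1-\Lambda_1/\Lambda_2=\gamma_s$.
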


\begin{proof}
By orthogonality of degree-two spherical harmonics, it is enough to work
componentwise. Fix \(m\), and consider the component
\(f_m(r)Y_{2,m}(\theta)\). Choose \(B_m=B_m^T\), \(\operatorname{tr}B_m=0\),
such that
\[
Y_{2,m}(\theta)=\theta\cdot B_m\theta .
\]
Then the corresponding affine zero mode is
\[
x\cdot B_m\nabla U(x)
=
h(r)Y_{2,m}(\theta),
\qquad
h(r)=rU'(r).
\]
The assumption
\[
\phi_2
\perp_{\dot H^s}
\left\{
x\cdot B\nabla U:
B=B^T,\ \operatorname{tr}B=0
\right\}
\]
therefore implies, for every \(m=1,\dots,d_2\),
\begin{equation}
\label{eq:component-Hs-orthogonal-h}
f_m(r)Y_{2,m}(\theta)
\perp_{\dot H^s(\mathbb R^n)}
h(r)Y_{2,m}(\theta).
\end{equation}

By Lemma~\ref{lem:sharp-classical-frac-sector-bound},
\[
Q_{\operatorname{frac},s}^{(2)}(f)
\ge
\gamma_s[fY_{2,m}]_{\dot H^s(\mathbb R^n)}^2,
\qquad
\gamma_s
=
1-\frac{\Lambda_1}{\Lambda_2}
=
\frac{2s}{\frac n2+s+1}.
\]
Define the nonnegative bilinear form
\[
b(f,g)
:=
\langle f,g\rangle_2
-
\gamma_s
\langle fY_{2,m},gY_{2,m}\rangle_{\dot H^s(\mathbb R^n)} .
\]

Using \eqref{eq:component-Hs-orthogonal-h}, we have
\(\langle f_m,h\rangle_2 = b(f_m,h)\).
Therefore, by \eqref{eq:degree-two-projection-form}, with Cauchy's inequality,
\[
\begin{aligned}
\mathcal Q_U^{(2)}(f_m)
-
\gamma_s[f_mY_{2,m}]_{\dot H^s(\mathbb R^n)}^2
&=
b(f_m,f_m)
-
\frac{|b(f_m,h)|^2}{\|h\|_2^2}\\
&\ge
        \frac{|b(f_m,h)|^2}{\|h\|_2^2}-\frac{|b(f_m,h)|^2}{\|h\|_2^2}=0.
\end{aligned}
\]
Summing over \(m=1,\dots,d_2\) yields
\[
\mathcal Q_U[\phi_2]
=
\sum_{m=1}^{d_2}\mathcal Q_U^{(2)}(f_m)
\ge
\gamma_s
\sum_{m=1}^{d_2}
[f_mY_{2,m}]_{\dot H^s(\mathbb R^n)}^2
=
\gamma_s[\phi_2]_{\dot H^s(\mathbb R^n)}^2.
\]
This proves \eqref{eq:sharp-degree-two-sector-gap}.
\end{proof}

The next lemma rules out any additional loss of coercivity in the higher even
sectors. Together with the odd-sector vanishing of the affine correction, this
completes the positivity analysis outside the symmetry and degree-two affine
modes.

\begin{lemma}
\label{lem:appendix-even-ge-four}
Let \(0<s<1\), \(n\ge2\), and let \(\ell\ge4\) be even. Then
\[
\mathcal Q_U[\phi_\ell]
\ge
\gamma_s[\phi_\ell]_{\dot H^s(\mathbb R^n)}^2,
\qquad
\gamma_s
=
\frac{2s}{\frac n2+s+1}.
\]
\end{lemma}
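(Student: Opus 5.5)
Work componentwise in the sector \(\ell\) (even, \(\ell\ge4\)) and compare the classical gap from Lemma~\ref{lem:sharp-classical-frac-sector-bound} with the affine correction bound from Lemma~\ref{lem:appendix-affine-correction-bound}. Write \(\phi_\ell=\sum_m \psi_{\ell,m}\) with \(\psi_{\ell,m}=f_{\ell,m}Y_{\ell,m}\). By the sector splittings \eqref{eq:frac-Q-sector-decomposition}, \eqref{eq:frac-Hs-sector-splitting} and \eqref{eq:variance-Lxi-frac-sector-block}, both \(\mathcal Q_{\operatorname{frac},s}[\phi_\ell]\) and \(\mathcal R_s[\phi_\ell]\) are sums over \(m\) of the one-component forms. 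Hence it suffices to prove, for each fixed \(m\),
\[
\mathcal Q_{\operatorname{frac},s}^{(\ell)}(f_{\ell,m})-\mathcal R_s^{(\ell)}(f_{\ell,m})
\ge\gamma_s[\psi_{\ell,m}]_{\dot H^s(\mathbb R^n)}^2 .
\]
By \eqref{eq:appendix-frac-sector-lower} and \eqref{eq:appendix-affine-correction-bound}, this reduces to the scalar inequality
\[
1-\frac{\Lambda_1}{\Lambda_\ell}-\frac{n+2s}{s}\rho_{\ell,s}^2\ \ge\ \gamma_s=1-\frac{\Lambda_1}{\Lambda_2},
\qquad\text{i.e.}\qquad
\frac{n+2s}{s}\rho_{\ell,s}^2\le \frac{\Lambda_1}{\Lambda_2}-\frac{\Lambda_1}{\Lambda_\ell}.
\]

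Next, make both sides explicit. From the definition of \(\Lambda_k\) and \(\Gamma(x+1)=x\Gamma(x)\), one gets \(\Lambda_{k+1}/\Lambda_k=(k+\frac n2+s)/(k+\frac n2-s)\). In particular \(\Lambda_1/\Lambda_2=(\frac n2+1-s)/(\frac n2+1+s)\). Since \(\Lambda_k\) is increasing, the right-hand side is increasing in \(\ell\), so it is bounded below by its value at \(\ell=4\):
\[
\frac{\Lambda_1}{\Lambda_2}\Bigl(1-\frac{\Lambda_2}{\Lambda_4}\Bigr),\qquad
\frac{\Lambda_2}{\Lambda_4}=\frac{(\frac n2+2-s)(\frac n2+3-s)}{(\frac n2+2+s)(\frac n2+3+s)} .
\]
On the left, \eqref{eq:rho2ms} with \(\ell=2m\), \(m\ge2\), together with the monotonicity \eqref{eq:appendix-rho-ratio}, gives \(\rho_{\ell,s}^2\le\rho_{4,s}^2\). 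Here
\[
\rho_{4,s}^2=\frac{s^2(1-s)^2}{(\frac n2+s)^2(\frac n2+s+1)^2}.
\]
So everything reduces to the single explicit inequality at \(\ell=4\):
\[
\frac{(n+2s)\,s(1-s)^2}{(\frac n2+s)^2(\frac n2+s+1)^2}
\le\frac{\frac n2+1-s}{\frac n2+1+s}\Bigl(1-\frac{\Lambda_2}{\Lambda_4}\Bigr),
\]
uniformly for \(0<s<1\) and \(n\ge2\).

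The main obstacle is verifying this elementary but messy rational inequality. I would simplify the left side to \(2s(1-s)^2/\bigl((\frac n2+s)(\frac n2+s+1)^2\bigr)\). On the right, \(1-\Lambda_2/\Lambda_4=\frac{2s(n+5)}{(\frac n2+2+s)(\frac n2+3+s)}\). After cancelling the common factor \(2s\), I would cross-multiply and set \(N=\frac n2\ge1\). The left side carries the factor \((1-s)^2\le 1-s\), small as \(s\to1\), and three powers of \(N\) in its denominator; the right side has only \(\sim N^{-2}\) decay times \((N+1-s)/(N+1+s)\ge (N)/(N+2)\). So I expect a crude comparison to suffice: bound \((1-s)^2\le1\), \(\frac n2+s\ge N\), \(N+s+1\ge N+1\) on the left; \(N+2+s\le N+3\), \(N+3+s\le N+4\) and \((N+1-s)/(N+1+s)\ge N/(N+2)\) on the right. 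This leaves a polynomial inequality in \(N\ge1\) checkable by expansion. If it fails near \(N=1\), \(s\) small, I would treat \(n=2,3\) separately by keeping the exact \(s\)-dependence. Strictness, needed for the equality statement in Proposition~\ref{pro:sharp-local-affine-coeff}, follows since \(\Lambda_\ell>\Lambda_2\) makes the classical bound strict.
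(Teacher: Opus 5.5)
Your proposal is correct and follows the paper's proof essentially step for step: it combines the classical sector bound with $\Lambda_\ell\ge\Lambda_4$, the Cauchy--Schwarz bound on $\mathcal R_s^{(\ell)}$ with $|\rho_{\ell,s}|\le|\rho_{4,s}|$, and then checks the explicit inequality $\eta_{4,s}>\gamma_s$ at $\ell=4$. One detail in the final check: your $s$-free crude comparison, $(N+2)(N+3)(N+4)\le N^2(N+1)^2(2N+5)$, holds for every $N\ge\frac32$ but fails at $n=2$ ($60>28$), so your fallback is genuinely needed for $n=2$. There the exact inequality $(1-s)^2(3+s)(4+s)<7(2-s)(1+s)(2+s)$ holds easily, because the left side is at most $12$ and the right side is at least $21$ on $0<s<1$.
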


\begin{proof}
Let \(\ell=2m\ge4\). By \eqref{eq:rho2ms} and  \eqref{eq:appendix-rho-ratio},
\[
|\rho_{\ell,s}|
\le
|\rho_{4,s}| =
\frac{s(1-s)}
{\left(\frac n2+s\right)\left(\frac n2+s+1\right)}
\qquad
\text{for every even }\ell\ge4 .
\]
Hence Lemma~\ref{lem:appendix-affine-correction-bound} gives
\begin{equation}
\label{eq:correction-estimate}
\begin{aligned}
\mathcal R_s^{(\ell)}(f_{\ell,m})
&\le
\frac{n+2s}{s}
|\rho_{4,s}|^2
[f_{\ell,m}Y_{\ell,m}]_{\dot H^s(\mathbb R^n)}^2
\\
&=
\frac{2s(1-s)^2}
{\left(\frac n2+s\right)\left(\frac n2+s+1\right)^2}
[f_{\ell,m}Y_{\ell,m}]_{\dot H^s(\mathbb R^n)}^2 .
\end{aligned}
\end{equation}

On the other hand, since \(\ell\ge4\), by Lemma~\ref{lem:sharp-classical-frac-sector-bound}, and
\eqref{eq:correction-estimate}
\[
\begin{aligned}
\mathcal Q_U^{(\ell)}(f_{\ell,m})&=
Q_{\operatorname{frac},s}^{(\ell)}(f_{\ell,m})- \mathcal R_s^{(\ell)}(f_{\ell,m})
\ge
\eta_{4,s}
[f_{\ell,m}Y_{\ell,m}]_{\dot H^s(\mathbb R^n)}^2  .
\end{aligned}
\]
where
\[
\eta_{4,s}
:=
1-\frac{\Lambda_1}{\Lambda_4}
-
\frac{2s(1-s)^2}
{\left(\frac n2+s\right)\left(\frac n2+s+1\right)^2}.
\]
A direct computation gives
\[
\begin{aligned}
\eta_{4,s}-\gamma_s
&=
2s
\left[
\frac{\left(\frac n2+1-s\right)(n+5)}
{\left(\frac n2+1+s\right)
        \left(\frac n2+2+s\right)
        \left(\frac n2+3+s\right)}
-
\frac{(1-s)^2}
{\left(\frac n2+s\right)
        \left(\frac n2+1+s\right)^2}
\right]
>0 .
\end{aligned}
\]

Therefore
\[
\mathcal Q_U[\phi_\ell]
=
\sum_{m=1}^{d_\ell}\mathcal Q_U^{(\ell)}(f_m)
\ge
\gamma_s[\phi_\ell]_{\dot H^s(\mathbb R^n)}^2
\]
in the even sector \(\ell\ge4\).
\end{proof}

\subsection{Conclusion: kernel identification}

We now assemble the sectorial information obtained above.  By the sector decompositions of
\(Q_{\operatorname{frac},s}\) and \(\mathcal R_s\),
\[
\mathcal Q_U(\phi)
=
\mathcal Q_U[\phi_0]
+
\sum_{\ell=1}^{\infty}\mathcal Q_U[\phi_\ell],
\]
where
\[
\mathcal Q_U[\phi_\ell]
=
Q_{\operatorname{frac},s}[\phi_\ell]
-
\mathcal R_s[\phi_\ell].
\]

In the radial sector, \(L_\xi(\phi_0)\) is independent of \(\xi\). Hence
\[
\mathcal R_s[\phi_0]=0,
\qquad
\mathcal Q_U[\phi_0]
=
Q_{\operatorname{frac},s}[\phi_0].
\]
Therefore, by \eqref{eq:frac-radial-kernel}
\[
\mathcal Q_U[\phi_0]=0
\quad\Longleftrightarrow\quad
\phi_0\in\operatorname{span}\{U,Z_0\}.
\]

In the first angular sector, \(\rho_{1,s}=0\) by
\eqref{eq:appendix-rho-odd-zero}. Hence
\[
\mathcal R_s[\phi_1]=0,
\qquad
\mathcal Q_U[\phi_1]
=
Q_{\operatorname{frac},s}[\phi_1].
\]
Thus, by \eqref{eq:frac-first-kernel}
\[
\mathcal Q_U[\phi_1]=0
\quad\Longleftrightarrow\quad
\phi_1\in
\operatorname{span}
\{\partial_{x_1}U,\dots,\partial_{x_n}U\}.
\]

In the degree-two sector, Lemma~\ref{lem:degree-two-affine-sector} gives
\[
\left\{\phi_2:\mathcal Q_U[\phi_2]=0\right\}
=
\left\{
x\cdot B\nabla U:
B=B^T,\ \operatorname{tr}B=0
\right\}.
\]

It remains to consider the higher sectors. If \(\ell\ge3\) is odd, then
\(\rho_{\ell,s}=0\), and hence
\[
\mathcal Q_U[\phi_\ell]
=
Q_{\operatorname{frac},s}[\phi_\ell]=0.
\]
If \(\ell\ge4\) is even, Lemma~\ref{lem:appendix-even-ge-four} gives the strict
lower bound
\[
\mathcal Q_U[\phi_\ell]
>
\gamma_s[\phi_\ell]_{\dot H^s(\mathbb R^n)}^2
\qquad
\text{for every nonzero }\phi_\ell .
\]
Hence no nonzero kernel element can occur in any even sector \(\ell\ge4\).

Combining the radial, first, second, and higher-sector conclusions, we obtain
\[
\ker\mathcal Q_U
=
\operatorname{span}
\left\{
U,\ Z_0,\ \partial_{x_1}U,\dots,\partial_{x_n}U,\
x\cdot B\nabla U
\right\}.
\]

\noindent\textbf{Conflict of interest:}
Authors state no conflict of interest.

\noindent\textbf{Data Availability Statement:}
Data sharing is not applicable to this article as no datasets were generated or analysed during the current study.

\noindent{\bf Acknowledgement.}
G-D.~Li was supported by NSFC (No.12561019).


\providecommand{\href}[2]{#2}
\providecommand{\arxiv}[1]{\href{http://arxiv.org/abs/#1}{arXiv:#1}}
\providecommand{\url}[1]{\texttt{#1}}
\providecommand{\urlprefix}{DOI }

\end{document}